\documentclass[12pt]{article}
\usepackage{amsmath,amsxtra,amssymb,color,latexsym,epsfig,amscd,amsthm,subfigure,fancybox,epsfig}
\usepackage[mathscr]{eucal}
\usepackage{graphicx}
\usepackage{cases}
\usepackage{enumitem}

\setlength{\oddsidemargin}{-0.08in}
\setlength{\evensidemargin}{-0.08in}
\setlength{\textheight}{9.0in}
\setlength{\textwidth}{6.5in}
\setlength{\topmargin}{-0.5in}

%


\newtheorem{thm}{Theorem}[section]

\newtheorem{lm}{Lemma}[section]

\theoremstyle{definition}
\newtheorem{defn}[thm]{Definition}

\theoremstyle{remark}

\numberwithin{equation}{section}


\newcommand{\eps}{\varepsilon}

\newcommand{\M}{\mathcal{M}}
\newcommand{\F}{\mathcal{F}}

\newcommand{\E}{\mathbb{E}}

\newcommand{\N}{\mathbb{N}}
\newcommand{\PP}{\mathbb{P}}

\newcommand{\R}{\mathbb{R}}

\newcommand{\Sc}{\mathcal{S}}

\numberwithin{equation}{section}
\newcommand{\1}{\boldsymbol{1}}


\newcommand{\op}{{\cal L}}

\newcommand{\bed}{\begin{displaymath}}
\newcommand{\eed}{\end{displaymath}}
\newcommand{\bea}{\bed\begin{array}{rl}}
\newcommand{\eea}{\end{array}\eed}

\newcommand{\barray}{\begin{array}{ll}}
\newcommand{\earray}{\end{array}}

\def\PM{\text{PM}^\eps}
\def\bar{\overline}
\def\hat{\widehat}
\def\wdt{\widetilde}
\def\a.s{\text{\;a.s.\;}}
\def\bdelta{\boldsymbol{\delta}}
\def\bnu{\boldsymbol{\nu}}

\begin{document}
\title{Long-Run Average Sustainable Harvesting Policies: Near Optimality\thanks{This
research was supported in part by the
Air Force Office of Scientific Research under grant FA9550-15-1-0131.}}

\author{Dang H. Nguyen\thanks{Department of Mathematics, Wayne State University, Detroit, MI
48202,
dangnh.maths@gmail.com.}
\and
 George  Yin\thanks{Department of Mathematics, Wayne State University, Detroit, MI
48202, gyin@math.wayne.edu.}
}
\maketitle

\begin{abstract}
This paper develops near-optimal 
sustainable harvesting strategies for the predator in a predator-prey system. The objective function is of
 long-run average per unit time type.
 To date,
 ecological systems under environmental noise
are usually modeled as
stochastic differential equations driven by a Brownian motion.
Recognizing that the formulation using a
 Brownian motion is
only an idealization, in this paper, it is assumed that
 the environment is subject to
disturbances characterized by a jump process with
rapid jump rates.
Under broad conditions, it is shown that
the systems under consideration can be approximated by a controlled diffusion system.
Based on the limit diffusion system,  control policies of
the original systems are constructed. Such
an approach enables us to develop sustainable harvesting
policies leading to near optimality.
To treat the underlying problems, one of the main difficulties is due to the long-run average objective function.
 This in turn, requires the handling of a number of issues related to ergodicity.  New approaches are developed to obtain the tightness of the underlying processes based on the population dynamic systems.

\bigskip
\noindent {\bf Keywords.} Sustainability; near-optimal strategy;
harvesting policy; long-run-average control;
ergodicity.

\bigskip
\noindent{\bf Subject Classification.}  60H10, 92D25, 93E20.

\end{abstract}

\newpage
\setlength{\baselineskip}{0.22in}
\section{Introduction}\label{sec:int}
This work encompasses the study of controlled predator-prey systems.
The control process is devoted to harvesting activities, which
is one of the central issues in bio-economics.
It has been widely recognized that it may not be a good idea to consider only maximizing short-term benefits focusing purely on harvesting. Although over-harvesting
in a short period may maximize the short-term economic benefits,
 it breaks
the balance between harvesting and its ecological implications.
Thus simple minded policies may lead to detrimental after effect.
As a result, it is crucially important to
pay attention not to render
 exceedingly harmful decision to the environment.
This situation has been observed
in some optimal harvesting models
with finite-time yield or discounted yield; see, e.g., \cite{LA, ALO, LO, SSZ, TY}, among others.

In contrast, ecologists and bio-economists
emphasize the importance of sustainable harvest
in both biological conservation
and long-term economic benefits; see \cite{AS, CC, MM}.
They introduce the concept of {\it maximum sustainable yield}, which is the largest yield (or catch) that can be taken from a species' stock over an infinite horizon.
Their findings
indicate
that
it is more reasonable to maximize the yield in such a way that a species is sustainable and not in danger leading to extinction of the species.
Inspired by the idea of using maximum sustainable yield,
we pay special attentions to sustainability, biodiversity, biological conservation,
and long-term economic benefits,
  and consider long-term horizon optimal strategies in this paper. In lieu of discounted profit, we examine objective functions that are long-run average per unit time type.
As was alluded to, the  papers \cite{LA, ALO, LO, SSZ, TY} concentrated on finite time horizon problems as well as long term objective function under discounting. However, there seems to be not much effort devoted to long-run average criteria
for the harvesting problem to the best of our knowledge.
Discounted objective pays more attention to the current performance, whereas it is certainly necessary to examine the performance when the future is as important.
  This is particularly the case when we take sustainability and long-term economic benefits into consideration.
We consider a long-run-average optimal harvesting problem
for a predator-prey model subject to random perturbations,
in which only the predator takes harvesting action.
This type of optimal harvesting problems
have been studied by some authors; see for example, \cite{LB16, LB14}.
However, harvesting efforts in these
 papers are confined to constant-harvesting strategies only,
which are usually far from optimal
for a larger and more realistic class of harvesting strategies.
In contrast to the discounted criteria, the long-run average criteria are much more difficult to handle.
One of the main difficulties is due to the long-run average cost
 criteria. To treat long-run average objective, one has to handle a number of delicate issues that are related to ergodicity.

To date, ecological systems under environmental noise
are usually modeled by
stochastic differential equations driven by a Brownian motion.
An important aspect of our work is concerned with what if the
noise is not of Brownian motion type. An innovation of the current paper is the use of wideband noise.
It has been widely recognized that
 Brownian motion is
only an idealized formulation or suitable limits of systems in
the real world.
To be more realistic, we would better assume that the environment is subject to
disturbances characterized by a jump process with
rapid jump rates.
This jump process can be modeled by the so-called wideband noise.
Motivated by the approach in \cite{KR},
 we consider a Lotka-Volterra predator-prey model
 with wideband noise and harvesting in this paper. Denote by
 $X^\eps(t)$ and $Y^\eps(t)$
 the sizes of the prey and the predator, respectively.
The system of interest is of the form
\begin{equation}\label{model-2}
\begin{cases}
d X^{\eps}(t)=&X^{\eps}(t)\big[a_1-b_1X^{\eps}(t)-c_1Y^{\eps}(t)\big]dt + \dfrac1\eps X^{\eps}(t)r_1(\xi^\eps(t))dt\\
d Y^{\eps}(t)=&Y^{\eps}(t)\big[a_2-h(Y^{\eps}(t))u(t)-b_2Y^{\eps}(t)+c_2X^{\eps}(t)\big]dt +\dfrac1\eps Y^{\eps}(t)r_2(\xi^\eps(t))dt,
\end{cases}
\end{equation}
where $\eps$ is a small parameter, $\xi(t)$ is an ergodic,
time-homogeneous, Markov-Feller process,
and $\xi^\eps(t)=\xi\left(\frac{t}{\eps^2}\right)$,
$a_i, b_i, c_i, i=1,2$ are positive constants, and
$u(t)$ represent the harvesting effort at time $t$
while $h(\cdot):\R_+\mapsto[0,1]$
indicates the effectiveness of harvesting,
which is assumed to be dependent of the population of the predator.
Thus, the amount of harvested biomass
in a short period of time $\Delta t$
is $Y^{\eps}(t)h(Y^{\eps}(t))u(t)\Delta t$.
Let $\Phi(\cdot):\R_+\mapsto\R_+$
be the revenue function that provides
the economic value as a function
of harvested biomass.
The time-average harvested value
over an interval $[0,T]$
is $\dfrac1T\int_0^T\Phi\Big(h(Y^{\eps}(t)Y^{\eps}(t))u(t)\Big)dt$.
Our goal is to
\begin{equation}\label{opt-har}
\text{ maximize  } \liminf_{T\to\infty}
\dfrac1T\int_0^T\Phi\Big(h(Y^{\eps}(t)Y^{\eps}(t))u(t)\Big)dt \ \hbox{a.s.}
\end{equation}
In our set up,
the harvesting strategy (the control) is only for the predator,  $Y^\eps(\cdot)$, which is also assumed in many papers (e.g., \cite{AHL, BC, MC, XLH}).
The rational is
that the predator has main impacts on the system,
whereas the economic influence of the prey is
not as significant.
In addition,
the prey may be too small or too passive to catch.
Thus we focus on the situation when the control is
in $Y^\eps$ equation only.

Because of the complexity of the model,
developing optimal strategies for the
 controlled system \eqref{model-2} and \eqref{opt-har},
 are usually difficult. Nevertheless,
one may wish to construct policies based on the limit system.
A natural question arises:
Can optimal or near-optimal harvesting strategies
for the diffusion model
be near optimal harvesting strategies
for the wideband-width model
when $\eps$ is sufficiently small?
In a finite horizon,
nearly optimal controls for systems under wideband noise perturbations
were developed in
 the work of Kushner and Ruggaldier \cite{KR}.
As was noted in their paper,
 that the original systems subject to wideband noise perturbations are rather difficult to handle;
there may be additional difficulties if
 the systems are non-Markovian. For infinite horizon problems, it was assumed in \cite{KR}  that the slow and fast components are jointly Markovian.
 By working with the associated probability measures, under suitable conditions, the authors established that there is a limit system being a controlled diffusion process.
Using the optimal or near-optimal controls of the limit systems, one constructs controls for the original systems and show the controls are nearly optimal.
Inspired by their work, we aim to develop near-optimal policies in this paper in an infinite horizon.
We focus on   objective functions  being long-run average per unit time type.
By assuming  the perturbing noise being Markovian,
we develop  near-optimal harvesting strategies (near-optimal controls).
In contrast to optimal controls in a finite horizon, to show that the approximation
works  over an infinite time interval
as in our setting, the ergodicity and the existence of the invariant measure have to be established.
In this paper,
we first show that there exists
an optimal harvesting strategy
for the limit controlled diffusion.
Then, we show that using near-optimal control of the limit diffusion system in the original system leads to  near-optimal controls of
the original system.

We note that in \cite{KR},   nonlinear systems were treated so
a number of assumptions were posed for such wideband noise driven systems in a general setting. In contrast, we have specific systems to work with thus we can no longer posing  general conditions as in the aforementioned paper. Instead, we need to start from scratch.
In fact,
 conditions (C1)-(C4) posed in \cite[Section 7, p. 310]{KR}
include the existence of $\delta$-optimal control, the existence of the associate invariant measure, tightness of the state process, and the  value function under certain admissible class and
the  value function
under stationary admissible relaxed controls being equal.
Because the problems were formulated in a general setting, these conditions are abstract and
are used as
sufficient conditions
to obtain near-optimal controls for wide-band noise
systems. In contrast,
 for the system that we are dealing with,
it is rather difficult to verify these conditions.
Some sufficient conditions
were also proposed in \cite[Conditions (D1)-(D4)]{KR}
by means of a perturbed Lyapunov function method. These conditions
were given to verify conditions (C1)-(C4).
Nevertheless, verifying
conditions (D1)-(D4) in \cite{KR} is still a difficult task for
our model. To begin with, it is difficult to find appropriate Lyapunov functions verifying conditions (D1)-(D4).
To overcome the difficulty,
we propose a new approach rather than finding a function $V$
satisfying the conditions (D1)-(D4) in \cite{KR}.
More precisely,
by analyzing the dynamics of the limit controlled diffusion
when the population of the species is low,
we obtain the tightness of probability measures of the controlled diffusion process.
Then,
using the above as a bridge,
 probabilistic arguments
enable us to prove the tightness
of probability measure of the controlled process perturbed by wideband noise.
Moreover,
we use stochastic analysis to carry out the desired estimates. The analysis itself is new and interest in its own right.
Therefore, the problem arises in control and optimization, but our solution methods are mainly probabilistic.

The rest of the paper is organized as follows.
In Section \ref{sec:form}, we formulate the problem
and identify the limit diffusion system.
The main results are given in Section \ref{sec:main}
while their proofs are provided in Section \ref{sec:pf}.
Section \ref{sec:rem} is devoted to some remarks and possible generalizations.
Finally,
we prove some auxiliary results in an appendix.

\section{Formulation}\label{sec:form}
We work with a complete filtered probability space
$(\Omega,\F,\F_t,\PP)$ satisfying the usual condition.
Denote $\R^2_+=\{(x,y)\in\R^2: x\geq0, y\geq 0\}$
and $\R^{2,\circ}_+=\{(x,y)\in\R^2: x>0, y> 0\}$.
To simplify notations,
we
denote $z=(x,y), \tilde z=(\tilde x,\tilde y),
Z(t)=(X(t),Y(t)), Z^\eps(t)=(X^\eps(t),Y^\eps(t))$.
We assume that harvest efforts
can be represented by a number in a finite interval $\M:=[0,M]$.
Suppose $\xi(t)$
is a pure jump Markov-Feller process
 taking values in a compact metric space $\Sc$.
Suppose its generator is given by
$$Q\phi(w)=q(w)\int_{\Sc}\Lambda(w,d\tilde w)\phi(\tilde w)-q(w)\phi(w)$$
where $q(\cdot)$ is continuous on $\Sc$
and $\Lambda(w, \cdot)$
is a probability measure on $\Sc$ for each $w$.
Suppose that
$\xi(t)$ is uniformly geometric ergodic,
that is
\begin{equation}\label{e1.1a}
\|P(t,w,\cdot)-\bar P(\cdot)\|_{TV}\leq C_0\exp(-\gamma_0 t),\,\text{ for any }\, t\geq 0, w\in\Sc,
\end{equation}
where $\bar P(\cdot)$ is a probability measure in $\Sc$ and $C_0, \gamma_0$ are some positive constants.
Clearly $\bar P(\cdot)$ is an invariant probability measure of $\{\xi(t)\}$.
Let $\chi(w,\cdot)=\int_0^\infty \big[P(t,w,\cdot)-\bar P(\cdot)\big]dt$.
It is well known that
if $\phi(w)$ is a continuous function on $\Sc$ satisfying $\int_{\Sc}\phi(w)\bar P(dw)=0$
then
\begin{equation}\label{e1.1}
\psi(w):=\int_{\Sc}\chi (w,d\tilde w)\phi(\tilde w)\,\text{
satisfying }\,Q\psi(w)=-\phi(w).
\end{equation}
Note that $\psi(\cdot)$ is well defined thanks to
the exponential decay in \eqref{e1.1a}.
Suppose that
\begin{equation}\label{e1.2}
r_i(\cdot)\,\text{ is bounded in }\, \Sc,\text{ and }\, \int_{\Sc}r_i(w)\bar P(dw)=0, \ i=1,2.
\end{equation}
Let
$A=(a_{ij})_{2\times2}$ with
$$a_{ij}=\int_{\Sc}\int_{\Sc}\chi(w,d\tilde w)\bar P(dw)\Big[r_i(w)r_j(\tilde w)+r_j(w)r_i(\tilde w)\Big].$$
We suppose that $A$ is positive definite with square root $(\sigma_{ij})_{2\times2}.$
Consider the diffusion
\begin{equation}\label{model-3}
\begin{cases}
d X(t)=X(t)\big[\bar a_1-b_1X(t)-c_1Y(t)\big]dt + X(t)(\sigma_{11}dW_1(t)+\sigma_{12}dW_2(t))\\
d Y(t)=Y(t)\big[\bar a_2-h(Y(t))u(t)-b_2Y(t)+c_2X(t)\big]dt + Y(t)(\sigma_{12}dW_1(t)+\sigma_{22}dW_2(t)),
\end{cases}
\end{equation}
	where
$\bar a_1=a_1+\dfrac{a_{11}}2=a_1+\dfrac{\sigma_{11}^2+\sigma_{12}^2}2$,
$\bar a_2=a_2+\dfrac{a_{22}}2=a_2+\dfrac{\sigma_{22}^2+\sigma_{12}^2}2$,
$W_1, W_2$ are two independent Brownian motions.

We suppose that the function $\Phi(\cdot):\R_+\mapsto\R_+$ represents the yield that  is
 Lipschitz in its argument satisfying  $\Phi(0)=0$.
That is, the yield is zero if
we harvest nothing.
If we want to maximize the average amount of the species harvested,
then $\Phi(y)=y$.
If we want to maximize the average money earned,
$\Phi(y)$ should have a ``saturated'' form, such as
$\Phi(y)=\dfrac{y}{c+y}$.
We assume the effectiveness $h(\cdot):\R_+\mapsto[0,1]$
is an increasing function and $h(0)=0$.
This
stems from
that the effectiveness
increases as the density of the species increases.

Let $\PM$ be the class of functions
$v:\R^2_+\times\Sc\mapsto\M$ such that
under the feedback control $u(t)=v(Z^\eps(t))$
there exists  a solution process to \eqref{model-2},
which is a Markov-Feller process.
For $v\in\PM$, define
$$J^{\eps}
(v):=\liminf_{T\to\infty}\dfrac1T\int_0^T\Phi
\Big(h(Y^{\eps}(t)Y^{\eps}(t))v(Y^\eps(t))\Big)dt \ \hbox{ a.s.}$$
For the wideband noise system,
it is difficult to find an optimal control, that is, a control $v^*\in\PM$ satisfying
$${\mathfrak J}^\eps
=\sup_{v\in\PM}\{J^{\eps}(v)
\}.$$
Thus, our goal is to find a near-optimal control $v\in\PM$
using the limit diffusion system.
To do that, we broaden the class of controls
by use of the ``relaxed controls".

We present here some concepts and notation introduced in \cite{KR}.
Let $M(\infty)$
denote the family of measures $\{m(\cdot)\}$ on the Borel subsets of
$[0,\infty)\times U$
satisfying $m([0,t]\times U)=t$
for all $t\geq0$.
By
the weak convergence $m_n(\cdot)\rightarrow m(\cdot)$
in $M(\infty)$
we mean $\lim_{n\to\infty}\int f(s,\alpha)m_n(ds\times d\alpha)
=\int f(s,\alpha)m(ds\times d\alpha)
$
for any continuous function $f(\cdot):[0,\infty)\times U\mapsto\R$ with compact support.

A random measure $m(\cdot)$ with values in $M(\infty)$ is said to be an admissible relaxed control for \eqref{model-2}
if $\int_U\int_0^tf(s,\alpha)m(ds\times d\alpha)$
is progressively measurable with respect to $\F^\eps_t:=\F_{\frac{t}\eps}$ for each bounded continuous function $f(\cdot)$.
With a relaxed control $m(\cdot)$,
let $\bar m_t=\lim_{s\to t} \dfrac1{s-t}\int_\M\int_t^sm(ds\times d\alpha)$,
the model \eqref{model-2} becomes
\begin{equation}\label{model-4}
\begin{cases}
d X^{\eps}(t)=&X^{\eps}(t)\big[a_1-b_1X^{\eps}(t)-c_1Y^{\eps}(t)\big]dt + \dfrac1\eps X^{\eps}(t)r_1(\xi^\eps(t))dt\\
d Y^{\eps}(t)=&Y^{\eps}(t)\big[a_2-h(Y^{\eps}(t))\bar m_t-b_2Y^{\eps}(t)+c_2X^{\eps}(t)\big]dt + \dfrac1\eps Y^{\eps}(t)r_2(\xi^\eps(t))dt
\end{cases}
\end{equation}
Let $\mathcal P(\M)$ be the space of invariant probability measures
with Prohorov's topology.
A relaxed control is said to be Markov
if there exists a measurable function $v:\R^2_+\mapsto\mathcal P(\M)$
such that $m_t=v(X^\eps(t)), t\geq0.$
For $z\in\R^2_+$, $w\in\Sc$ and $u\in\M$, define
$$
F(z,w)=\Big(xr_1(w), yr_2(w)\Big)^\top
$$
and
$$
G(z, u)=
\Big(x[a_1-b_1x-c_1y], y[a_2-h(y)u-b_2y+c_2x]\Big)^\top
.$$
By an ergodicity argument
(see, for example,
\cite{DN,DNY,HNY,RR}),
it can be shown that
if $-a_2+c_2\dfrac{a_1}{b_1}<0$ then
for any admissible control $u(t)$,
$Y^\eps(t)$ tends to $0$ with probability 1,
which implies
$$\lim_{T\to\infty}\dfrac1T\int_0^T\Phi\Big(h(Y^{\eps}(t))Y^{\eps}(t)u(t)\Big)dt=0 \text{ a.s.}$$
Thus, to avoid the trivial limit,
we assume throughout this paper that
\begin{equation}\label{positive}
-a_2+c_2\dfrac{a_1}{b_1}>0.
\end{equation}

Define the operator
$$\op^\eps_u \phi(z, w)
=\dfrac1{\eps^2}Q\phi(z,w)+\dfrac1\eps \dfrac{\partial \phi(z,w)}{\partial z}F(z,w)+\dfrac{\phi(z,w)}{\partial z}G(z, u),
$$
where
$\phi:\R^2_+\times\Sc\mapsto \R$ is continuous and
have continuous derivative with respect to the first variable,
$\dfrac{\partial \phi(z,w)}{\partial z}$.
Denote by $\PP_{z,w}$ and $\E_{z,w}$
the probability measure and the corresponding expectation
of the process $(Z^\eps(\cdot),\xi^\eps(\cdot))$
with initial condition $(z,w)$.
Note that $\PP_{z,w}$ and $\E_{z,w}$
depends implicitly on the control $m(t)$.
For any bounded stopping times $\tau_1\leq\tau_2$,
we have
$$\E_{z,w}\phi(Z^\eps(\tau_2),\xi^\eps(\tau_2))
=\E_{z,w}\phi(Z^\eps(\tau_1),\xi^\eps(\tau_1))
+\E_{z,w}\int_{\tau_1}^{\tau_2}\op^\eps_{\bar m_s}\phi(Z^\eps(s),\xi^\eps(s))ds
$$
given that the expectations involved exist.

A random measure $m(\cdot)$ with values in $M(\infty)$ is said to be an admissible relaxed control for \eqref{model-3}
if $\int_U\int_0^tf(s,\alpha)m(ds\times d\alpha)$
is independent of $\{W_i(t+s)-W_i(t), s>0, i=1,2\}$ for each bounded continuous function $f(\cdot)$.
Under a relaxed control $m(\cdot)$, the controlled diffusion \eqref{model-3} becomes
\begin{equation}\label{model-5}
\begin{cases}
d X(t)=X(t)\big[\bar a_1-b_1X(t)-c_1Y(t)\big]dt + X(t)(\sigma_{11}dW_1(t)+\sigma_{12}dW_2(t))\\
d Y(t)=Y(t)\big[\bar a_2-h(Y(t))\bar m_t-b_2Y(t)+c_2X(t)\big]dt + Y(t)(\sigma_{12}dW_1(t)+\sigma_{22}dW_2(t)).
\end{cases}
\end{equation}
The generator for the controlled diffusion process \eqref{model-5} is
$$
\begin{aligned}
\op_u\phi(z)=&\dfrac{\partial\phi(z)}{\partial x}x[\bar a_1-b_1x-c_1y]+\dfrac{\partial\phi(z)}{\partial x} y[\bar a_2-h(y)u-b_2y+c_2x]\\
&+\dfrac12\left(a_{11}\dfrac{\partial^2\phi(z)}{\partial x^2}x^2+2a_{12}\dfrac{\partial^2\phi(z)}{\partial x\partial y}xy+a_{22}\dfrac{\partial^2\phi(z)}{\partial y^2}y^2\right).
\end{aligned}
$$

\begin{defn}{\rm
A relaxed control $m(\cdot)$ for \eqref{model-5} is said to be Markov
if there exists a measurable function $v:\R^2_+\mapsto\mathcal P(\M)$
such that $m_t=v(Z(t)), t\geq0.$
A Markov control $v$ is a relaxed control satisfying that
$v(z)$ is a Dirac measure on $\M$ for each $z\in\R^2_+$.
Denote the set of Markov controls
and relaxed Markov controls by $\Pi_{M}$ an $\Pi_{RM}$, respectively.
With a relaxed Markov control, $Z(t)$
is a Markov process that has the strong Feller property in $\R^{2,\circ}_+$;
see \cite[Theorem 2.2.12]{ABG}.
Since the diffusion is nondegenerate in $\R^{2,\circ}_+$,
if the process $Z(t)$ has an invariant probability measure in $\R^{2,\circ}_+$,
the invariant measure is unique, denoted by $\eta_v$.
In this case, the control $v$ is said to be stable.}\end{defn}

\section{Main Results}\label{sec:main}
First, we need the existence and uniqueness of positive solutions
to \eqref{model-5} for any admissible relaxed control.

\begin{lm}\label{lm3.1}
If $m(\cdot)$ is an admissible relaxed control for \eqref{model-3} $($or \eqref{model-5}$)$,
then there exists a unique nonanticipative solution to
\eqref{model-5} with initial value $z=(x,y)\in\R^2_+$ satisfying
\begin{enumerate}
\item $\PP_{z}\{X(t)>0, \ t\geq 0\}=1$
$($resp. $\PP_{z}\{X(t)>0, \ t\geq 0\}=1)$
if $x>0$ $($resp. $y>0)$,
and
$\PP_{z}\{X(t)=0\, t\geq 0\}=1$
$($resp. $\PP_{z}\{Y(t)=0\, t\geq 0\}=1)$
if $x=0$ $($resp. $y=0)$.
\item
$$
\E_{z} \sup_{t\leq T}(|Z(t)|^2)\leq K(1+|z|^2)
$$
where $K$ depends only on $T$.
\end{enumerate}
\end{lm}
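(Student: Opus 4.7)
The plan is to establish, in sequence, local strong existence and uniqueness, invariance of the axes and strict positivity in the interior, and finally the quadratic moment bound, which simultaneously rules out explosion.

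For local existence, the drift and diffusion coefficients of \eqref{model-5} are polynomial in $z=(x,y)$ modulated by the bounded factor $h(y)\bar m_t\in[0,M]$, hence locally Lipschitz in $z$ uniformly in the relaxed control. Classical SDE theory then produces a unique nonanticipative maximal strong solution up to an explosion time $\tau_\infty$. The invariance of the axes is immediate from uniqueness: the coefficients of the $X$-equation vanish identically at $X=0$, so $X\equiv 0$ solves the system from $x=0$, and analogously for $Y$. To obtain strict positivity starting from $x>0$, I would apply It\^o's formula to $\log X(t)$ on the interval $[0,\tau_n)$ with $\tau_n=\inf\{t:|Z(t)|\geq n\}$; the resulting drift $\bar a_1-b_1X-c_1Y-\tfrac12(\sigma_{11}^2+\sigma_{12}^2)$ and the constant diffusion coefficients are bounded on $\{|Z|\leq n\}$, so $\log X$ cannot reach $-\infty$ before $\tau_n$. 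Once the moment estimate below forces $\tau_n\to\infty$, strict positivity follows; the same argument handles $Y$.

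The crux is the moment bound, for which I propose the linear Lyapunov function $V(z)=x+\lambda y$ with the distinguished weight $\lambda=c_1/c_2$. This weight is engineered precisely to eliminate the Lotka-Volterra cross term: direct computation gives
$$\op_u V(z)=\bar a_1 x+\lambda\bar a_2 y-b_1 x^2-\lambda b_2 y^2+(\lambda c_2-c_1)xy-\lambda u h(y)y,$$
so for $\lambda=c_1/c_2$ the bracket vanishes and dropping the nonpositive harvest term yields $\op_u V(z)\leq K_0$ uniformly in $u\in\M$. Exploiting $x,y\geq 0$, which implies $x\leq V$ and $\lambda y\leq V$, the second-order part of $\op_u V^2$ is a quadratic form in $x$ and $\lambda y$ bounded by $C_1 V^2$, giving $\op_u V^2(z)\leq C_2(1+V^2(z))$. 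Stopping at $\tau_n$, taking expectations in It\^o's formula, and applying Gronwall to the resulting integral inequality produces $\E V^2(Z(t\wedge\tau_n))\leq K(T)(1+|z|^2)$ uniformly in $n$; letting $n\to\infty$ delivers both non-explosion ($\tau_n\to\infty$ a.s.) and the bound on $\E V^2(Z(t))$. Burkholder-Davis-Gundy applied to the local martingale part of $V^2(Z(\cdot))$, combined with this bound, then upgrades the estimate to $\E\sup_{t\leq T}V^2(Z(t))\leq K'(T)(1+|z|^2)$, and since $V^2\geq\tfrac12\min(1,\lambda^2)|z|^2$ on $\R^2_+$, assertion (2) follows.

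The one substantive obstacle is the cancellation via $\lambda=c_1/c_2$. A naive $|z|^2$ Lyapunov function gets stuck on the predator-coupling term $2c_2 xy^2$ arising from the $Y$-drift, whose Young-inequality bound consumes the dissipative $-2b_2 y^3$ by a fraction that cannot be made arbitrarily small, so the estimate fails to close when $c_2$ is large relative to $b_2$. Weighting the coordinates at the linear level kills the cross term identically and trivializes everything that follows.
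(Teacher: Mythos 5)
Your proposal is correct and follows essentially the same route as the paper: the paper also cancels the Lotka--Volterra cross terms via the linear combination $c_2 X + c_1 Y$ (your $V=x+\lambda y$ with $\lambda=c_1/c_2$ up to a constant factor), observes that the resulting drift is bounded by a linear expression in $X,Y$ after dropping the negative quadratic and harvest terms, and then invokes standard moment-estimate arguments, with existence, uniqueness, and axis invariance handled by a truncation/Khasminskii-type argument. The only difference is cosmetic — you spell out the Gronwall and Burkholder--Davis--Gundy steps that the paper delegates to cited references.
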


\begin{proof}
This lemma can be proved by arguments in \cite[Theorem 1]{KR}
or \cite[Theorem 2.2.2]{ABG}.
Note that the coefficients in \eqref{model-5} do not satisfy the
 linear growth condition.
However, using a truncation argument
and a Khaminskii-type method in \cite{LM},
we can easily prove the existence of a unique solution to \eqref{model-5}
satisfying claim 1.
Moreover, we can estimate
$$
\begin{aligned}
d[c_2X(t)+c_1Y(t)]=&c_2X(t)\big[\bar a_1-b_1X(t)\big]dt +c_1Y(t)\big[\bar a_2-h(Y(t))\bar m_t-b_2Y(t)\big]dt \\
&+c_2X(t)(\sigma_{11}dW_1(t)+\sigma_{12}dW_2(t))
 + c_1Y(t)(\sigma_{12}dW_1(t)+\sigma_{22}dW_2(t))\\
\leq& [c_2\bar a_1X(t)+c_1\bar a_2X(t)]dt
\\&+c_2X(t)(\sigma_{11}dW_1(t)+\sigma_{12}dW_2(t))+ c_1Y(t)(\sigma_{12}dW_1(t)+\sigma_{22}dW_2(t)).
\end{aligned}
$$
In this estimate,
the right-hand side is linear in $X(t)$ and $Y(t)$.
Using standard arguments, (e.g., \cite[Theorem 3.5]{RK} or \cite[Proposition 3.5]{ZY}),
we can obtain the moment estimate, the second claim of this lemma.
\end{proof}

With this lemma, in each finite interval,
we can approximate $Z^\eps(t)$
by $Z(t)$, which is proved in \cite[Theorem 5]{KR}.

\begin{lm}\label{lm2.2}
For any compact set $\mathcal K\in\R^2_+$,
$\{(Z^\eps(\cdot),m(\cdot)), t\geq 0\}$ with $Z^\eps(0)\in\mathcal K$
is tight in $D[0,\infty)\times M(\infty)$.
If $(Z^{\eps_k}(\cdot),m^{(k)}(\cdot))$
converges weakly to $(\hat Z(\cdot),\hat m(\cdot))$ as $k\to\infty$
with $\eps_k\to0$ as $k\to\infty$,
then there exists independent Brownian motions $W_1(t)$ and $W_2(t)$
such that $\hat m(\cdot)$ is progressively measurable with respect to the filtration generated by $W_1(t), W_2(t)$,
and
$\hat Z$ satisfying \eqref{model-5}
with $(Z(\cdot),m(\cdot))$ replaced by $(\hat Z(\cdot),\hat m(\cdot))$.
\end{lm}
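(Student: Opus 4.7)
I would follow the perturbed test function and martingale problem method of Kushner--Ruggaldier \cite{KR}, specialized to the present predator--prey system. The argument divides into three parts: uniform-in-$\eps$ moment and tightness estimates for $Z^\eps(\cdot)$; extraction of a subsequential weak limit of $(Z^\eps,m)$; and identification of the limit as a weak solution of \eqref{model-5}.

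For the uniform estimates, which is where I expect the main obstacle, the drift $G(z,u)$ fails to be of linear growth, so the wideband-noise perturbation $\eps^{-1}X^\eps r_1(\xi^\eps)$ cannot be controlled by a direct Gronwall argument. I would first exploit the linear domination of $c_2X^\eps(t)+c_1Y^\eps(t)$ already used in Lemma~\ref{lm3.1}, and then apply the perturbed Lyapunov method: pick $V(z)$ polynomial (or exponential) in $c_2x+c_1y$ and construct $V_1(z,w)$ solving $QV_1(z,\cdot)=-\partial_zV(z)\cdot F(z,\cdot)$ via \eqref{e1.1}. Applying $\op^\eps_u$ to $V^\eps:=V+\eps V_1$ kills the $\eps^{-1}$-singularity and leaves a bounded expression, yielding $\E_z\sup_{t\le T}|Z^\eps(t)|^p\le K_{T,p}(1+|z|^p)$ uniformly in small $\eps$. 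The delicate point is that without the linear domination of Lemma~\ref{lm3.1} the corrector $V_1$ would grow faster than $V$ and the $\eps^{-1}$-singularity could not be absorbed. Tightness of $\{Z^\eps(\cdot)\}$ in $D([0,\infty);\R^2_+)$ then follows from an Aldous--Kurtz modulus-of-continuity criterion applied to the same functional, while tightness of $\{m(\cdot)\}$ in $M(\infty)$ is automatic because $\M$ is compact.

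For the identification, fix a convergent subsequence $(Z^{\eps_k},m^{(k)})\Rightarrow(\hat Z,\hat m)$ and a test function $\phi\in C^2_c(\R^{2,\circ}_+)$, and set
\[
\phi^\eps(z,w)=\phi(z)+\eps\,\phi_1(z,w)+\eps^2\,\phi_2(z,w),
\]
where $\phi_1(z,w):=\int_\Sc\chi(w,d\tilde w)\,\partial_z\phi(z)\cdot F(z,\tilde w)$ satisfies $Q\phi_1=-\partial_z\phi\cdot F$ by \eqref{e1.1} (solvable because $\int r_i\,\bar P=0$), and $\phi_2$ is chosen to satisfy $Q\phi_2=\op_u\phi-\partial_z\phi_1\cdot F-\partial_z\phi\cdot G$. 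The $u$-dependence cancels in the right-hand side, and its $\bar P$-average vanishes by a direct computation using the definition of $a_{ij}$ together with the shifts $\bar a_i=a_i+a_{ii}/2$; this is precisely what forces $\op_u$ to be the correct limit generator. A routine expansion then gives $\op^\eps_u\phi^\eps(z,w)=\op_u\phi(z)+O(\eps)$ uniformly on compact $z$-sets, so that
\[
\phi(Z^\eps(t))-\int_0^t\int_\M \op_\alpha\phi(Z^\eps(s))\,m(ds\times d\alpha)
\]
is a martingale up to an $O(\eps)$ error. Passing to the limit along $\eps_k$ via Skorokhod representation identifies $\hat Z$ with the controlled martingale problem for the generator in \eqref{model-5}, and standard Stroock--Varadhan-type martingale representation then produces independent Brownian motions $W_1,W_2$ with respect to whose filtration $\hat m$ is progressively measurable and $\hat Z$ solves \eqref{model-5}. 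A minor technical point is that $\phi^\eps$ is merely jointly measurable in $(z,w)$, but this suffices for the Dynkin identity stated after the definition of $\op^\eps_u$ since $\xi^\eps$ is piecewise constant between jumps and the $w$-dependence of $\phi_1,\phi_2$ is bounded by \eqref{e1.1a}.
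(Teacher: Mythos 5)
Your plan reconstructs, in the concrete predator--prey setting, exactly what the paper itself does not spell out: the statement of Lemma~\ref{lm2.2} is simply attributed to Kushner--Runggaldier \cite[Theorem 5]{KR}, and your perturbed-test-function/martingale-problem argument is precisely the mechanism behind that theorem. The decomposition $\phi^\eps=\phi+\eps\phi_1+\eps^2\phi_2$ with $Q\phi_1=-\partial_z\phi\cdot F$ and $Q\phi_2=\op_u\phi-\partial_z\phi_1\cdot F-\partial_z\phi\cdot G$, the observation that the $u$-dependence cancels so that the solvability condition only constrains the diffusion coefficients $a_{ij}$ and the shifts $\bar a_i=a_i+a_{ii}/2$, and the identification of the limit via the controlled martingale problem followed by martingale representation to produce $W_1,W_2$ are all the standard components of the \cite{KR} proof. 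Your use of the linear combination $c_2x+c_1y$ to tame the superlinear drift when building the perturbed Lyapunov function is the same device the paper uses in Lemma~\ref{lm3.1} and in Lemma~\ref{lm-a1}, so the uniform moment and tightness estimates are handled compatibly with the paper's other arguments. In short: correct, and essentially the same route, with the proposal merely filling in the details the paper delegates to the citation.
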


We need the following lemma, whose proof is postponed to
the appendix.

\begin{lm}\label{lm2.3}
The following claims hold.
\begin{itemize}
\item For any admissible relaxed control $m(\cdot)$,
we have that
\begin{equation}\label{e1-lm2.3}
\limsup_{t\to\infty} \dfrac1T\int_0^T \Phi\Big(Y(t)h(Y(t))\bar m_t\Big)dt\leq \wdt C \ \hbox{ a.s. }
\end{equation}
for some constant $\wdt C$.

\item Every relaxed Markov control is stable and
there exists $\hat C>0$ such that
\begin{equation}\label{e2-lm2.3}
\int_{\R^{2,\circ}\times U} [1+\Phi(yh(y)u)]^2\pi_v(dz\times du)\leq \hat C
\end{equation}
for any relaxed Markov control $v$,
where
$\pi$ is a measure in $\R^{2,\circ}_+\times\M$
defined by
$$\pi_v(dz\times du)=[v(z)(du)]\times \eta_v(du).$$
\item The family $\{\eta_\nu:\,\nu\in\Pi_{RM}\}$
is tight in $\R^{2,\circ}_+$.
$[$Recall that $\eta_\nu$ is the invariant measure.$]$
\end{itemize}
\end{lm}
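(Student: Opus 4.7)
The plan is to build everything on the Foster--Lyapunov inequality produced by the linear Lyapunov function $V(x,y)=c_2 x+c_1 y$, which exploits the cancellation of the predator--prey interaction terms $\pm c_1 c_2 XY$ when one computes $\op_u V$. All three claims flow from this single estimate.

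For claim~1, I would apply It\^o's formula to $V$ along \eqref{model-5}. The drift collapses to $c_2 X(\bar a_1 - b_1 X) + c_1 Y(\bar a_2 - h(Y)\bar m_t - b_2 Y)$, and the elementary bound $c_i \bar a_i w\le \tfrac12 c_i b_i w^2 + C_i$ gives
\[
\int_0^T c_1 Y(t) h(Y(t))\bar m_t\,dt + \tfrac12\int_0^T\!(c_2 b_1 X^2 + c_1 b_2 Y^2)\,dt \le C T + V(z) - V(Z(T)) + M_T,
\]
where $M_t$ is a continuous martingale with $[M]_t$ bounded by a constant multiple of $\int_0^t(X^2+Y^2)\,ds$. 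The key trick is to absorb $M_T$ into the quadratic integral via $|M_T|\le \delta [M]_T + C_\delta$, valid on sets of arbitrarily high probability for small $\delta$, and then divide by $T$ and let $T\to\infty$. This yields $\limsup_T T^{-1}\int_0^T Y(t) h(Y(t))\bar m_t\,dt<\infty$ almost surely, and since $\Phi$ is Lipschitz with $\Phi(0)=0$, inequality \eqref{e1-lm2.3} follows.

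For claim~2, the same calculation read under $\op_u$ directly gives the Foster--Lyapunov inequality $\op_u V \le -\alpha(x^2+y^2)+\beta$ uniformly in $u\in\M$, so Krylov--Bogolyubov produces, for every relaxed Markov control $v$, an invariant probability measure $\eta_v$ on $\R^2_+$. To show it charges $\R^{2,\circ}_+$, I would analyze the two boundary components. On $\{y=0\}$ the prey equation is the scalar stochastic logistic, which is uncontrolled because the only $u$-dependence in the $Y$-drift sits inside $h(Y)\bar m_t$ and $h(0)=0$; its unique invariant measure on $(0,\infty)$ allows one to compute the transverse Lyapunov exponent of $\log Y$, which is strictly positive by \eqref{positive} combined with $\bar a_2 = a_2+\tfrac12(\sigma_{12}^2+\sigma_{22}^2)$. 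A symmetric statement holds on $\{x=0\}$. Standard persistence arguments (as in \cite{DN,HNY}) then force $\eta_v(\R^{2,\circ}_+)=1$, and nondegeneracy together with strong Feller (\cite[Theorem 2.2.12]{ABG}) give uniqueness and hence stability. The bound \eqref{e2-lm2.3} comes from integrating the Foster--Lyapunov inequality against $\pi_v$ together with $\Phi(yh(y)u)\le LMy$.

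Claim~3 is immediate once the Foster--Lyapunov constants $\alpha,\beta$ are noted to be independent of $v$: then $\int(x^2+y^2)\,\eta_v(dz)\le \beta/\alpha$ uniformly in $v\in\Pi_{RM}$, and Markov's inequality delivers tightness. The main obstacle is the boundary-exclusion step of claim~2, because $v$ is only Borel-measurable and could behave pathologically near $\{xy=0\}$. The saving point is that the boundary dynamics themselves are uncontrolled (through $h(0)=0$), so the invasion-rate computation is robust in $v$; the remaining work is to translate the uniform positive invasion rate into a uniform positive interior mass, which is exactly what makes claim~3 quantitative rather than merely qualitative.
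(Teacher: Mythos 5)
Your proof of claim~1 and the integral bound in claim~2 is essentially the paper's argument: apply It\^o to the linear function $1+c_2x+c_1y$ (your $V$), exploit the cancellation $-c_1c_2XY+c_1c_2XY=0$, get the drift bound $\op_u V_2 \le K_5-\beta|z|^2$ uniformly in $u\in\M$, and then dispose of the local martingale using the strong law of large numbers for martingales (whose quadratic variation grows at most like $\int_0^t(X^2+Y^2)\,ds$). That is exactly what the paper does, and your ``absorb $M_T$ into $[M]_T$'' device is just a restatement of the same LLN fact, so claim~1 and \eqref{e2-lm2.3} are fine.

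The gap is in the stability part of claim~2 and in claim~3. Your Foster--Lyapunov inequality $\int(x^2+y^2)\,\eta_v(dz)\le\beta/\alpha$ and Markov's inequality produce tightness in $\R^2_+$, not in $\R^{2,\circ}_+$; the function $x^2+y^2$ cannot detect mass escaping to the boundary $\{xy=0\}$, and that is precisely what claim~3 forbids. You correctly identify this as ``the remaining work,'' but the qualitative persistence argument you sketch (unique boundary invariant measure, positive transverse Lyapunov exponent, quote \cite{DN,HNY}) gives at best $\eta_v(\R^{2,\circ}_+)=1$ for a \emph{fixed} control, not the uniform-in-$v$ boundary repulsion that tightness of the whole family $\{\eta_\nu:\nu\in\Pi_{RM}\}$ requires. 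The paper closes this gap by introducing the Lyapunov function $V(z)=(1+c_2x+c_1y)/(x^{p_1}y^{p_2})$, which blows up both at infinity and as $x\to0$ or $y\to0$, and then reusing the entire quantitative chain of Section~\ref{sec:pf} (Lemmas~\ref{lm3.5}--\ref{lm3.7} and Theorem~\ref{thm3.1}, applied directly to the diffusion or obtained by sending $\eps\to0$ in \eqref{e0-thm3.1}) to deduce the uniform moment bound $\int_{\R^{2,\circ}_+}V(z)\,\eta_\nu(dz)\le C/(1-q)$. That single bound simultaneously yields existence of $\eta_v$ in the interior (stability), uniqueness via nondegeneracy and strong Feller, and tightness in $\R^{2,\circ}_+$. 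Without the inf-compact-in-$\R^{2,\circ}_+$ Lyapunov function and the quantitative near-boundary drift estimate of Lemma~\ref{lm3.5}, the third bullet of the lemma remains unproved.
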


With this lemma, letting
$$\rho^*=\sup_{v\in\Pi_{RM}}\left\{\int_{\R^{2,\circ}\times U} \Phi(yh(y)u)\pi_v(dz\times du)\right\},$$
we have the following result from \cite[Theorems 3.7.11 and 3.7.14]{ABG}.

\begin{thm}\label{thm2.1}
The  Hamilton-Jacobi-Bellman $($HJB$)$ equation
$$\max_{u\in U}\Big[\op_u V(x)+c(x,u)\Big]=\rho$$
admits a solution $V^*\in C^2(\R^{2,\circ}_+)$ satisfying $V^*(0)=0$
and $\rho=\rho^*$.
A relaxed Markov control is optimal if and only if
it satisfies
$$
\begin{aligned}&\!\!\!
\dfrac{\partial V^*}{\partial y}\Big[y(-a_2-h(y)\bar v(z) -b_2y+c_2x]+\Phi(yh(y)v(z))\Big]\\
&\qquad =\max_{u\in U} \dfrac{\partial V^*}{\partial y}\Big[y(-a_2-h(y)u-b_2y+c_2x]+\Phi(yh(y)u)\Big],
\end{aligned}
$$
where $\bar v(z)=\int_\M u[v(z)(du)].$
\end{thm}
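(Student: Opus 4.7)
The plan is to invoke Theorems 3.7.11 and 3.7.14 of Arapostathis--Borkar--Ghosh (ABG) essentially as black boxes, so the real work is checking that the hypotheses of those theorems are met by the controlled diffusion \eqref{model-5} with running cost $c(z,u)=\Phi(yh(y)u)$. The ABG framework for the average-cost ergodic control of a nondegenerate diffusion on an open set requires: (i) local Lipschitz continuity and joint continuity of drift and diffusion in $(z,u)$; (ii) uniform ellipticity of the diffusion matrix on compact subsets of the state space $\R^{2,\circ}_+$; (iii) stability (existence of an invariant measure) for every relaxed Markov control, together with tightness of the family $\{\eta_v:v\in\Pi_{RM}\}$; and (iv) a uniform integrability type condition on $c$ with respect to this family.

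First I would verify (i)--(ii). Continuity and local Lipschitz regularity are immediate from the explicit form of $G(z,u)$ and the coefficients of \eqref{model-5}, using that $h$ is continuous and $\M$ is compact. Nondegeneracy on $\R^{2,\circ}_+$ follows because the diffusion matrix on the right of \eqref{model-5} is $\mathrm{diag}(x,y)\,A\,\mathrm{diag}(x,y)$, which is strictly positive definite on $\R^{2,\circ}_+$ since $A$ was assumed positive definite. Next, (iii) and (iv) are exactly the content of the second and third bullets of Lemma \ref{lm2.3}: every $v\in\Pi_{RM}$ is stable, $\{\eta_v\}$ is tight, and the uniform bound $\int[1+\Phi(yh(y)u)]^2\,\pi_v(dz\times du)\le \hat C$ provides the second-moment estimate needed for uniform integrability of $c$ under $\{\pi_v\}$.

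Once these hypotheses are in place, Theorem 3.7.11 of ABG yields a function $V^*\in C^2(\R^{2,\circ}_+)$ and a scalar $\rho$ with $V^*(0)=0$ solving
\[
\max_{u\in U}\bigl[\op_u V^*(z)+c(z,u)\bigr]=\rho,
\]
and identifies $\rho$ with the optimal ergodic value over stable relaxed Markov controls, i.e.\ $\rho=\rho^*$. Theorem 3.7.14 of ABG then supplies the verification-type characterization: a relaxed Markov control $v$ is optimal if and only if it achieves the pointwise maximum in the HJB equation. Because the only $u$-dependent term in $\op_u V^*(z)+c(z,u)$ is $\frac{\partial V^*}{\partial y}[-y h(y) u]+\Phi(yh(y)u)$, the pointwise maximization in the ABG criterion reduces exactly to the formula displayed in the theorem statement.

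The main obstacle I anticipate is the interface between the ABG hypotheses and our specific setting, where $\R^{2,\circ}_+$ is an open (and unbounded) state space with the boundary $\{xy=0\}$ absorbing. The ABG near-monotonicity/inf-compactness assumption must be verified away from both zero and infinity, and this is essentially what the tightness in Lemma \ref{lm2.3} buys us at infinity, while the exclusion condition \eqref{positive} keeps the optimal dynamics away from extinction of the predator. The second subtlety is that $c(z,u)$ is not coercive, so identifying $\rho=\rho^*$ (rather than merely $\rho\ge\rho^*$) relies on the uniform $L^2$-bound in \eqref{e2-lm2.3} to pass to the limit along near-optimal controls; this is precisely why the quadratic bound, rather than a first-moment bound, was built into Lemma \ref{lm2.3}.
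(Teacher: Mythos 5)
Your proposal is correct and matches the paper's approach: the paper likewise derives Theorem \ref{thm2.1} by citing Theorems 3.7.11 and 3.7.14 of \cite{ABG}, with Lemma \ref{lm2.3} supplying exactly the stability, tightness, and uniform $L^2$ integrability hypotheses you identify, while nondegeneracy and regularity on $\R^{2,\circ}_+$ are immediate from the coefficients. The only cosmetic difference is that you spell out the hypothesis checklist more explicitly than the paper, which states the conclusion directly after Lemma \ref{lm2.3}.
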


The existence of an optimal Markov control can be
derived from a well-known selection theorem; see e.g., \cite[pp. 199-200]{FR75}.
Let $v^*$ be an optimal Markov control.
There exists a sequence of $v_n:\R^2_+\mapsto U$
such that $v_n(z)$ is locally Lipschitz in $z$
and $\lim_{n\to\infty} v_n=v$ almost everywhere in $\R^{2,\circ}_+$.
Since every Markov control is stable,
and the family $\{\nu_v, v\in \Pi_{RM}\}$
is tight on $\R^{2,\circ}_+$,
we have from \cite[Lemma 3.2.6]{ABG}
that
\begin{equation}\label{e-delta.opt}
\lim_{n\to\infty} \rho_{\nu_n}=\rho_{v^*}=\rho^*.
\end{equation}
This indicates that
we can always find a $\delta$-optimal Markov control
that is locally Lipschitz.
We state here the main result of this paper.

\begin{thm}\label{thm2.2}
For any $\delta>0$, there exists a locally Lipschitz Markov control $u^\delta$
such that
$$J^\eps(u^\delta):=\liminf_{T\to\infty}\dfrac1T\int_0^T\Phi\Big(h(Y^{\eps}(t)Y^{\eps}(t))u^\delta(t)\Big)dt\leq \rho^*+\delta$$
and that for sufficiently small $\eps>0$,
we have
$$J^\eps(u^\delta)\geq \mathfrak{J^\eps}-3\delta.$$
\end{thm}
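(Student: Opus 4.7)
Pick the locally Lipschitz $\delta$-optimal Markov control $u^\delta$ produced by the selection argument after \eqref{e-delta.opt}, so $\rho_{u^\delta}\geq \rho^*-\delta$. The plan is to build two complementary estimates linking the wideband system to the limit diffusion and then combine them.

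\textbf{Step 1: $\lim_{\eps\to 0} J^\eps(u^\delta)=\rho_{u^\delta}$.} Freeze the feedback $u^\delta$ in \eqref{model-2}. On any finite window $[0,T]$, Lemma \ref{lm2.2} gives weak convergence of $Z^\eps$ to the limit diffusion $Z$ under $u^\delta$; since $u^\delta$ is locally Lipschitz and $\Phi\circ h$ is Lipschitz with $\Phi(0)=0$, the continuous mapping theorem yields convergence of the finite-horizon time-averaged cost. Lemma \ref{lm2.3} guarantees that $Z$ under $u^\delta$ is ergodic with invariant measure $\eta_{u^\delta}$, so Birkhoff's theorem gives $T^{-1}\int_0^T\Phi(Y(t)h(Y(t))u^\delta(Z(t)))\,dt\to \rho_{u^\delta}$ a.s. To exchange $T\to\infty$ and $\eps\to 0$, I would show that the occupation measures $\mu^\eps_T(\cdot):=T^{-1}\int_0^T\mathbf 1\{Z^\eps(s)\in\cdot\}\,ds$ are tight in $\mathcal P(\R^{2,\circ}_+)$ uniformly in $T$ and $\eps$, following the ``bridge'' strategy announced in the introduction: apply Lemma \ref{lm2.2} over long but $\eps$-dependent windows to transport the tightness of $\{\eta_v:v\in\Pi_{RM}\}$ from Lemma \ref{lm2.3} back to $Z^\eps$, absorbing the $\eps^{-1}$ wideband drift through a corrector built from the Poisson equation \eqref{e1.1}. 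Every weak limit of $\mu^\eps_T$ must then be invariant for the limit diffusion under $u^\delta$ and hence equal $\eta_{u^\delta}$. Because $\rho_{u^\delta}\leq \rho^*$, this already yields $J^\eps(u^\delta)\leq \rho^*+\delta$ for small $\eps$, the first inequality of the theorem.

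\textbf{Step 2: $\mathfrak J^\eps\leq \rho^*+2\delta$ for small $\eps$.} For each $\eps$ choose a control $v^\eps\in\PM$ with $J^\eps(v^\eps)\geq \mathfrak J^\eps-\delta$, and view the random measure $m^\eps_t=\delta_{v^\eps(\cdot)}$ as an admissible relaxed control. Applying the same uniform-tightness argument to the joint occupation measures $\pi^\eps_T$ on $\R^{2,\circ}_+\times\M$, one extracts a subsequential weak limit $\pi$. A standard martingale-problem identification using Lemma \ref{lm2.2} shows that $\pi$ is invariant for the limit diffusion under some relaxed Markov control $v\in\Pi_{RM}$, so $\pi=\pi_v$ and
\[
\limsup_{\eps\to 0} J^\eps(v^\eps)\;\leq\;\int\Phi(yh(y)u)\,\pi_v(dz\times du)\;\leq\;\rho^*.
\]
Hence $\mathfrak J^\eps\leq \rho^*+2\delta$ for small $\eps$, and combining with Step 1
\[
J^\eps(u^\delta)\;\geq\;\rho_{u^\delta}-\delta\;\geq\;\rho^*-2\delta\;\geq\;\mathfrak J^\eps-3\delta,
\]
which is the second inequality.

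\textbf{Main obstacle.} Lemma \ref{lm2.2} is a finite-horizon statement while $J^\eps$ and $\mathfrak J^\eps$ are stationary objects, so the entire difficulty lies in interchanging $T\to\infty$ with $\eps\to 0$. Three features make this delicate: the state space $\R^{2,\circ}_+$ is unbounded, the limit diffusion is degenerate on $\partial\R^{2,\circ}_+$ and can cluster near the axes when densities are low, and the $\eps^{-1}$ wideband drift forbids any naive Lyapunov control. Building the uniform-in-$\eps$ tightness of $\mu^\eps_T$ and $\pi^\eps_T$ by leveraging Lemma \ref{lm2.3} and iterating Lemma \ref{lm2.2} over windows whose length is tuned to $\eps$ is where essentially all the real work sits; once that is in place, the limit identification and the arithmetic combining Steps 1 and 2 are standard ergodic-theoretic bookkeeping.
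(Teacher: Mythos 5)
Your proposal correctly diagnoses the architecture of the problem: the theorem reduces to verifying that (i) a locally Lipschitz $\delta$-optimal Markov control exists (your selection argument), and (ii) the state processes $\{Z^\eps(t): 0\le t<\infty,\ \eps\le\eps_0,\ u\in\PM\}$ are tight in the open quadrant $\R^{2,\circ}_+$ uniformly in $\eps$ and $t$. Your Steps~1 and~2 then use these two ingredients to interchange $T\to\infty$ and $\eps\to 0$ and conclude near-optimality. This is exactly the role played by \cite[Theorem 8]{KR} in the paper: the paper simply invokes that theorem, reducing the whole proof to verifying the abstract conditions (C1) (tightness) and (C2) ($\delta$-optimal Lipschitz Markov control), and then the displayed proof of Theorem~\ref{thm2.2} is two sentences noting that Theorem~\ref{thm3.1} supplies (C1). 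You rederive the logic of the cited theorem rather than citing it, which is a reasonable alternative route.

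However, there is a genuine gap, and you name it yourself: your plan for establishing the uniform tightness of the occupation measures of $Z^\eps$ is only a heuristic (``apply Lemma~\ref{lm2.2} over long but $\eps$-dependent windows,'' ``absorbing the $\eps^{-1}$ drift through a corrector''). That is not a proof. Finite-horizon weak convergence (Lemma~\ref{lm2.2}) cannot by itself transfer the tightness of $\{\eta_v\}$ from the limit diffusion to the $\eps$-system uniformly over $T\ge 0$ and $\eps\le \eps_0$, because the error bound from the weak-convergence approximation degrades as the window length grows, while your argument needs control for all $T$. Something must replace the missing Lyapunov structure. This is precisely where the paper puts all of its real work: Lemmas~\ref{lm3.2}--\ref{lm3.7} and Theorem~\ref{thm3.1} construct an explicit perturbed Lyapunov function $V^\eps(z,w)=V(z)+\eps V_1(z,w)$ (with $V(x,y)=(1+c_2x+c_1y)/(x^{p_1}y^{p_2})$, inf-compact on $\R^{2,\circ}_+$), verify a drift inequality under $\op^\eps_u$ uniformly over admissible controls, and combine this with an ergodicity analysis near the boundary (Lemma~\ref{lm3.5}, whose proof rests on the auxiliary comparison system~\eqref{tilde-z}) to produce a uniform geometric return estimate~\eqref{e0-thm3.1}. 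None of this construction appears in your proposal. So while your outline points at the right obstacle and frames the problem correctly, the central technical content of the paper's proof --- the uniform-in-$(\eps,t,u)$ Lyapunov bound furnishing condition~(C1) --- is missing from your argument.
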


The result above is known as chattering-type theorem. It connects relaxed controls and that of ordinary controls, and indicates that for any relaxed control, we can find a locally Lipschitz control to approximate the relaxed control. This is important because even though relaxed controls facilitate the establishment of the desired asymptotic results. Such control sets are much larger than the usual ordinary controls and cannot be used in the real applications. Thus viable approximation will be much appreciated.
In view of \cite[Theorem 8]{KR},
 we proceed to verify the following conditions to prove the desired result.

\begin{enumerate}[label=(\rm C{\arabic*})]
    \item \label{C1}
    There is an $\eps_0>0$ such that $\{Z^\eps(u,t), u\in {PM}^\eps, 0\leq t<\infty, \eps\leq \eps_0\}$ is $\PP_{z,w}$-tight in $\R^{2,\circ}_+$
    for each $(z,w)\in\R^{2,\circ}_+\times\Sc$.
\item \label{C2}
There is a $\delta$-optimal Markov control $u(z)$
that is locally Lipschitz in $z$ for any $\delta>0$.
\end{enumerate}

Condition \ref{C2} has been verified in our manuscript;  see \eqref{e-delta.opt}.
Since the dynamics of $Z^\eps(t)$ is
dominated
by negative quadratic terms
when $Z^\eps(t)$ is large,
it is easy to prove the tightness of  $\{Z^\eps(u,t), u\in {PM}^\eps, 0\leq t
<\infty, \eps\leq \eps_0\}$ in $\R^{2}_+$.
However, we need the tightness in $\R^{2,\circ}_+$ to achieve the near optimality.
To do that we need to analyze the behavior of $Z^\eps(u,t)$
near the boundary.
Inspired by \cite{BL},
we utilize the ergodicity of the system on the boundary and
a property of the Laplace transform
 to construct a function
$V^\eps(z,w)$
satisfying the inf-compact condition in $\R^{2,\circ}_+$, i.e.,
$$\lim_{R\to\infty}\inf\left\{V^\eps(z,w): z+\frac1x+\frac1y>R\right\}=\infty$$
and that
$$\E_{z,w} V^\eps(Z^\eps(t),\xi^\eps(t))\leq C(1+V(z,w))$$
for any control $u\in {PM}^\eps$ and $t\geq0$.
Clearly, \ref{C1} is proved if such a function is constructed.
In contrast to
the technique used in \cite{BL},
which is applied to a process in a compact space,
the verification in our case is more difficult
because the space $\R^2_+$ is not compact
and we have to treat a family of singularly perturbed processes
rather than a single process.

\section{Proofs of Results}\label{sec:pf}
First, when  $p_0$, $p_1$, $p_2>0$
are sufficiently small, we have
\begin{equation}\label{e3.1}
2p_0+p_1b_1+p_2c_2<b_1,\,\text{ and }\, 2p_0+ p_1c_1+p_2b_2<c_1.
\end{equation}
We can also choose $p_1$ and $p_2$ such that
\begin{equation}\label{e3.2}
p_1a_1-p_2a_2<0.
\end{equation}
By \eqref{positive} and \eqref{e3.2},
we have
\begin{equation}\label{e-lambda}\lambda=\dfrac{1}{11}\min\left\{p_1a_1-p_2a_2, p_2\left(-a_2+\dfrac{a_1c_2}{b_1}\right)\right\}>0
\end{equation}

In view of \eqref{e1.1} and \eqref{e1.2}, there exist
bounded functions
 $r_3(w)$
and $r_4(w)$
such that
$Qr_3(w)=r_1(w)$
and $Qr_4(w)=r_2(w)$.
Let $V(x,y)=\dfrac{1+c_2x+c_1y}{x^{p_1}y^{p_2}}.$
Define
$$
V_1(z,w):=xr_3(w)\dfrac{\partial V(z)}{\partial x}+yr_4(w)\dfrac{\partial V(z)}{\partial y}.
$$
We have
\begin{equation}\label{e3.8}
QV_1(z,w)=-xr_1(w)\dfrac{\partial V(z)}{\partial x}-yr_2(w)\dfrac{\partial V(z)}{\partial y}=-\dfrac{\partial V(z)}{\partial z}\cdot F(z,w).
\end{equation}
By direct calculation and the boundedness of $r_i(w)$,  for $i=3,4$, there is a $K_2>0$ such that
\begin{equation}\label{e3.3}
\left|V_1(z,w)\right|\leq K_2V(z),\, (z,w)\in\R^{2,\circ}_+\times\Sc,
\end{equation}
\begin{equation}\label{e3.4}\left|\dfrac{\partial V_1(z,w)}{\partial z}\cdot F(z,w)\right|\leq K_2V(z),\, (z,w)\in\R^{2,\circ}_+\times\Sc,
\end{equation}
and
\begin{equation}\label{e3.5}\left|\dfrac{\partial V_1(z,w)}{\partial z}\cdot G(z,u)\right|\leq K_2(1+|z|)V(z),\, (z,w)\in\R^{2,\circ}_+\times\Sc, u\in\M.
\end{equation}

In view of \eqref{e3.1}, there exists an $H>0$ such that
$$
\begin{aligned}
\inf_{z\in\R^2_+,|z|>H,u\in\M}\bigg\{p_1&\big|a_1-b_1x-c_1y\big|+p_2\big|-a_2-h(y)u-b_2y+c_2x\big|\\
&+3+K_2+p_0(1+|z|)+\dfrac{c_2x(a_1-b_1x)+c_1y(-a_2-h(y)u-b_2y)}{1+c_2x+c_1y}\bigg\}<0.
\end{aligned}
$$
Let
$$
\begin{aligned}
H_1:=\sup_{z\in\R^2_+,|z|\leq H,u\in\M}\bigg\{&p_1\big|a_1-b_1x-c_1y\big|+p_2\big|-a_2-h(y)u-b_2y+c_2x\big|+3+K_2\\
&+p_0(1+|z|)+\dfrac{c_2(a_1-b_1x)
+c_1(-a_2-h(y)u-b_2y)}{1+c_2x+c_1y}\bigg\}<\infty.
\end{aligned}
$$
By the definitions of $H$ and $H_1$, we have
\begin{equation}\label{e3.9}
\begin{aligned}
\dfrac{\partial V(z)}{\partial z}\cdot G(z, u)
=&V(x,y)\bigg[-p_1\big(a_1-b_1x-c_1y\big)-p_2\big(-a_2-h(y)u-b_2y+c_2x\big)\\
&\qquad\qquad+\dfrac{c_2(a_1-b_1x)+c_1(-a_2-h(y)u-b_2y)}{1+c_2x+c_1y}\bigg]\\
\leq&\big(H_1\1_{\{z<H\}}-3-K_2-p_0(1+|z|)\big) V(z).
\end{aligned}
\end{equation}

Let $V^\eps(z,w)=V(z)+\eps V_1(z,w),$
we have from \eqref{e3.3} that
\begin{equation}\label{e3.6}
(1-\eps K_2)V(z)\leq V^\eps(z,w)\leq (1+\eps K_2)V(z),\,\,\, z\in\R^{2,\circ}_+, s\in\Sc.
\end{equation}
If $\eps>0$ is sufficiently small such that
\begin{equation}\label{e3.10}
\eps K_2\leq p_0;\,\, (H_1+3)\eps K_2<1;\,\,
\end{equation}
using \eqref{e3.4}, \eqref{e3.5}, \eqref{e3.8}, and \eqref{e3.9},
we can estimate
\begin{equation}\label{e3.7}
\begin{aligned}
\op^\eps_u V^\eps(z,w)=&\dfrac{\partial V(z)}{\partial z}\left[\dfrac1\eps F(z,w)+G(z, u)\right]\\
&+\eps\dfrac{\partial V_1(z, w)}{\partial z}\left[\dfrac1\eps F(z,w)+G(z, m)\right]+\dfrac1{\eps}Q V_1(z,w)
\\
\leq& \big(H_1\1_{\{z<H\}}-3-K_2-p_0(1+|z|)\big) V(z)+K_2V(z)+\eps K_2(1+|z|)V(z)\\
\leq& \big((H_1+1)\1_{\{z<H\}}-2)V(z)\\
\leq&\big((H_1+2)\1_{\{z<H\}}-1)V^\eps(z,w),
\end{aligned}
\end{equation}
where the last two lines follow from \eqref{e3.6} and \eqref{e3.10}.
By virtue of \eqref{e3.7},
standard arguments show that
\begin{equation}\label{e3.11}
\E_{z,w} V^\eps(Z(t))\leq e^{(H_1+2)t}V^\eps(z),\, t\geq0, z\in\R^{2,\circ}_+, w\in\Sc
\end{equation}
Let $\tau^\eps=\inf\{s\geq 0: Z^\eps(s)\leq H\}$. Since $\op^\eps_u V^\eps(z,w)\leq -V^\eps(z,w)$ if $z\geq H$,
we have that
\begin{equation}\label{e3.12}
\begin{aligned}
\E_{z,w} e^{t\wedge\tau^\eps}V^\eps\big(Z^\eps(t\wedge\tau^\eps),\xi^\eps(t\wedge\tau^\eps)\big)
&=V^\eps(z)+\E_{z,w}\int_0^{t\wedge\tau^\eps}e^{s}\Big[V^\eps(s)+\op^\eps_{\bar m_s}V^\eps(Z^\eps(s),\xi^\eps(s))\Big]ds\\
&\leq V^\eps(z),\,\text{ for }\, t\geq0, z\in\R^{2,\circ}_+, w\in\Sc.
\end{aligned}
\end{equation}

\begin{lm}\label{lm3.2}
There exist $L>0$ and $\eps_1>0$
such that for all $\eps<\eps_1$,
\begin{equation}\label{e3.18}
\E_{z,w} \dfrac{1}{V_1^\eps(Z^\eps(t),\xi^\eps(t))}\leq Le^{(H_1+2)t}\dfrac{1+|z|^2}{V_1^\eps(z,w)},\,\text{ for }\, (z,w)\in\R^{2,\circ}_+\times\Sc, \ t\geq0.
\end{equation}
\end{lm}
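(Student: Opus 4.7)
The plan is to construct an auxiliary function $\widehat V^\eps(z,w)$ on $\R^{2,\circ}_+\times\Sc$ that majorizes $1/V^\eps(z,w)$ and satisfies a Foster--Lyapunov inequality $\op^\eps_u \widehat V^\eps \leq (H_1+2)\widehat V^\eps$ uniformly in $u\in\M$. Granting this, Dynkin's formula applied to $e^{-(H_1+2)t}\widehat V^\eps(Z^\eps(t),\xi^\eps(t))$ (after a standard localization at $\tau_n=\inf\{t:|Z^\eps(t)|+1/X^\eps(t)+1/Y^\eps(t)>n\}$ to handle the boundary singularity of $1/V$) gives
$$\E_{z,w}\widehat V^\eps\bigl(Z^\eps(t),\xi^\eps(t)\bigr)\leq e^{(H_1+2)t}\widehat V^\eps(z,w),$$
and then $\widehat V^\eps \asymp(1+|z|^2)/V^\eps(z,w)$ together with $1/V^\eps\leq \widehat V^\eps$ (since $1+|Z^\eps|^2\geq 1$) yields \eqref{e3.18}.

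For the candidate, set $\widehat V(z):=(1+|z|^2)/V(z)$, which is smooth on $\R^{2,\circ}_+$. Since $\widehat V$ has no $w$-dependence, the $\frac{1}{\eps}\partial_z\widehat V\cdot F(z,w)$ contribution in $\op^\eps_u\widehat V$ must be neutralized by an $\eps$-correction, exactly as was done for $V^\eps=V+\eps V_1$. Using the same $r_3,r_4$ from \eqref{e1.1}, I would put
$$\widehat V_1(z,w):=-\Bigl[x\,r_3(w)\,\tfrac{\partial \widehat V}{\partial x}+y\,r_4(w)\,\tfrac{\partial \widehat V}{\partial y}\Bigr],\qquad \widehat V^\eps:=\widehat V+\eps\widehat V_1,$$
so that $Q\widehat V_1=-\partial_z\widehat V\cdot F$. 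An elementary calculation gives $x\partial_x\log\widehat V=\tfrac{2x^2}{1+|z|^2}+p_1-\tfrac{c_2 x}{1+c_2 x+c_1 y}$ and its $y$-analogue, both uniformly bounded on $\R^{2,\circ}_+$; consequently $|\widehat V_1|\leq K\widehat V$ and $\widehat V^\eps\asymp\widehat V$ for $\eps$ small.

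The core step is the Lyapunov bound. After the $1/\eps$-cancellation,
$$\op^\eps_u\widehat V^\eps = \partial_z\widehat V\cdot G(z,u)+\partial_z\widehat V_1\cdot F(z,w)+\eps\,\partial_z\widehat V_1\cdot G(z,u).$$
The dominant first term, divided by $\widehat V$, equals $(x\partial_x\log\widehat V)(a_1-b_1 x-c_1 y)+(y\partial_y\log\widehat V)(a_2-h(y)u-b_2 y+c_2 x)$; because the bracketed prefactors stay bounded away from $0$ for large $|z|$ while the parenthesized factors tend to $-\infty$ like $-|z|$, this quantity diverges to $-\infty$ at rate $-C|z|$, uniformly in $u\in\M$, as in \eqref{e3.9}. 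The two correctors contribute at most $K\widehat V$ and $\eps K(1+|z|)\widehat V$ respectively by the same type of estimate as \eqref{e3.4}--\eqref{e3.5}. Combining, one obtains $\op^\eps_u\widehat V^\eps\leq-\widehat V^\eps$ for $|z|\geq H$ and $\op^\eps_u\widehat V^\eps\leq(H_1+1)\widehat V^\eps$ for $|z|<H$ after choosing $H$ and $\eps$ as in \eqref{e3.10}, whence the claimed inequality with constant $H_1+2$.

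The main obstacle is the generator bookkeeping for $\widehat V^\eps$: because $\widehat V$ combines the boundary singularity $x^{p_1}y^{p_2}$ with the polynomial growth factor $1+|z|^2$, verifying that the $x\partial_x$- and $y\partial_y$-derivatives of the prefactors appearing in $\widehat V_1$ remain bounded on all of $\R^{2,\circ}_+$ requires some care. Once this is in place, passing from $\widehat V^\eps$ back to $1/V^\eps$ and the Dynkin--Gr\"onwall argument are standard.
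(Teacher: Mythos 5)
Your proposal is essentially the paper's own argument: the paper takes the majorant $\tilde V(z)=(1+c_2x+c_1y)x^{p_1}y^{p_2}=(1+c_2x+c_1y)^2/V(z)$ (rather than your $(1+|z|^2)/V(z)$, which is the same up to constants), builds the analogous $\eps$-corrector $\tilde V^\eps=\tilde V+\eps(xr_3\partial_x\tilde V+yr_4\partial_y\tilde V)$, establishes $\E_{z,w}\tilde V^\eps(Z^\eps(t),\xi^\eps(t))\leq e^{(H_1+2)t}\tilde V^\eps(z,w)$ by the same Lyapunov bookkeeping as for $V^\eps$, and then converts via the sandwich $1/V^\eps\lesssim\tilde V^\eps\lesssim(1+c_2x+c_1y)^2/V^\eps$. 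One small slip: $1/V$ has no boundary singularity (it vanishes as $x\to0^+$ or $y\to0^+$); the localization is only needed for the polynomial growth as $|z|\to\infty$.
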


\begin{proof}
Let $\tilde V(z)=(1+c_2x+c_1y)x^{p_1}y^{p_2}$.
Construct a perturbed Lyapunov function
$$\tilde V^\eps(z,w)=\tilde V(z)+\eps\left(xr_3(w)\dfrac{\partial\tilde V(z)}{\partial x}+yr_4(w)\dfrac{\partial\tilde V(z)}{\partial y}\right)
$$
 Similar to estimates in \eqref{e3.7},
we can find $K_3>0$ such that
\begin{equation}\label{e3.13}
(1-\eps K_3)\tilde V(z)\leq\tilde V^\eps(z,w)\leq (1+\eps K_3)\tilde V(z)
\end{equation}
and
\begin{equation}\label{e3.14}
\E_{z,w} \tilde V^\eps(Z^\eps(t), \xi^\eps(t))\leq e^{(H_1+2)t}\tilde V^\eps(z,w)
\end{equation}
when $\eps$ is sufficiently small.
On the other hand, for any $(z,w)\in\R^{2,\circ}_+\times\Sc$, we have
\begin{equation}\label{e3.15}
\dfrac1{V(z)}\leq \tilde V(z)\leq  (1+c_2x+c_1y)^2 \dfrac1{V(z)}.
\end{equation}
which combined with \eqref{e3.6} and \eqref{e3.13} implies that
\begin{equation}\label{e3.16}
\dfrac{1}{V_1^\eps(z,w)}\leq \dfrac{1}{(1-\eps K_2)V(z)}\leq  \dfrac{1}{(1-\eps K_2)(1-\eps K_3)}\tilde V^\eps(z,w)
\end{equation}
and
\begin{equation}\label{e3.17}
\tilde V^\eps(z,w)
\leq  (1+\eps K_3)\tilde V(z)\leq (1+\eps K_3) \dfrac{(1+c_2x+c_1y)^2}{V(z)}\leq (1+\eps K_2)^2 \dfrac{(1+c_2x+c_1y)^2}{V^\eps(z,w)}
.\end{equation}
Applying \eqref{e3.16} and \eqref{e3.17} to \eqref{e3.14},
 we can
easily obtain \eqref{e3.18}
for suitable $L>0$
when $\eps$ is sufficiently small.
\end{proof}

\begin{lm}\label{lm3.3}
There are $\hat K>0$ and $\eps_2>0$ such that
for any  $\eps<\eps_2$, and any admissible control
$m(\cdot)$ for \eqref{model-2}, we have
$$\E_{z,w}\int_0^t |Z^\eps(s)|^2ds\leq \hat K(1+|z|+t),$$
and
$$\E_{z}\int_0^t |Z(s)|^2ds \leq \hat K(1+|z|+t).$$
\end{lm}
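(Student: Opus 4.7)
The plan is to use the linear Lyapunov function $W(z)=c_2x+c_1y$, chosen so that the Lotka--Volterra interaction terms $\mp c_1c_2xy$ cancel exactly when one computes $\dfrac{\partial W}{\partial z}\cdot G(z,u)$. What survives is
$$\dfrac{\partial W}{\partial z}\cdot G(z,u)=c_2a_1x+c_1a_2y-c_2b_1x^2-c_1b_2y^2-c_1h(y)uy,$$
which is at most $C_1-C_2|z|^2$ for suitable $C_1,C_2>0$ by absorbing the linear terms via Young's inequality and dropping the non-positive harvesting contribution. This dissipative structure at infinity is precisely what produces the desired quadratic integral bound.

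\textbf{The diffusion case.} Since $W$ is linear in $z$, the It\^o correction in \eqref{model-5} vanishes identically, so $\op_uW(z)\le C_1'-C_2|z|^2$ (with $C_1'$ accounting for the shift from $a_i$ to $\bar a_i$). Applying Dynkin's formula on the stopped interval $[0,t\wedge\tau_n]$ with $\tau_n=\inf\{s\ge 0:|Z(s)|\ge n\}$, using Lemma~\ref{lm3.1}\,(2) to justify finiteness, and then passing $n\to\infty$ by monotone convergence together with $W\ge 0$, one obtains
$$C_2\,\E_z\!\int_0^t|Z(s)|^2ds\le W(z)+C_1't\le \hat K(1+|z|+t).$$

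\textbf{The wideband case.} For \eqref{model-2} the $\eps^{-1}$ drift must be killed by a perturbation. Define
$$W^\eps(z,w)=W(z)+\eps W_1(z,w),\qquad W_1(z,w)=-c_2xr_3(w)-c_1yr_4(w),$$
where $r_3,r_4$ satisfy $Qr_3=r_1$, $Qr_4=r_2$ as constructed via \eqref{e1.1}. Then $QW_1(z,w)=-\dfrac{\partial W}{\partial z}\cdot F(z,w)$, so the $\eps^{-1}$ contributions to $\op^\eps_uW^\eps$ cancel and
$$\op^\eps_uW^\eps(z,w)=\dfrac{\partial W}{\partial z}\cdot G(z,u)+\dfrac{\partial W_1}{\partial z}\cdot F(z,w)+\eps\,\dfrac{\partial W_1}{\partial z}\cdot G(z,u).$$
The first summand is $\le C_1-C_2|z|^2$ by the previous step; the second is $O(|z|)$ by boundedness of $r_i$; the third is $O(\eps(1+|z|^2))$ since the coefficients of $G$ grow at most quadratically in $z$ and $u\in\M$ is bounded. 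Choosing $\eps_2>0$ small enough that the $\eps(1+|z|^2)$ part is absorbed into $\tfrac{C_2}{2}|z|^2$ and that $W^\eps\ge 0$ (which holds whenever $\eps$ is smaller than $\min(c_1,c_2)/\sup_{w,i}|r_i(w)|$ times a fixed constant), we obtain the uniform estimate $\op^\eps_uW^\eps(z,w)\le C_3-\tfrac{C_2}{2}|z|^2$ for every $u\in\M$ and $\eps<\eps_2$. The same localized Dynkin argument then delivers the first inequality.

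\textbf{Main obstacle.} The only non-routine step is the rigorous application of Dynkin's formula, since $\E_{z,w}|Z^\eps(t)|^2$ is not a priori known to be finite. This is handled by the standard device of stopping at $\tau_n=\inf\{s:|Z^\eps(s)|\ge n\}$, where all expectations are automatically finite, obtaining the estimate on $[0,t\wedge\tau_n]$, and then using Fatou's lemma together with the non-negativity of $W^\eps$ and the uniform upper bound on $\op^\eps_uW^\eps$ to pass $n\to\infty$. All remaining content reduces to the algebraic cancellation of the cross terms $\pm c_1c_2xy$ and the quadratic negativity contributed by $-c_2b_1x^2-c_1b_2y^2$.
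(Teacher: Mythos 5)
Your proposal is correct and follows essentially the same route as the paper: both use the (nearly) linear Lyapunov function built from $c_2x+c_1y$ precisely because the Lotka--Volterra cross terms $\pm c_1c_2xy$ cancel in $\partial W/\partial z\cdot G$, leaving $-c_2b_1x^2-c_1b_2y^2$ as the quadratic dissipation; both then perturb by the first-order corrector involving $r_3,r_4$ to kill the $\eps^{-1}$ drift and absorb the residual $O(\eps|z|^2)$ term for small $\eps$, and both finish with Dynkin/Gronwall. Your sign choice $W_1=-c_2xr_3-c_1yr_4$ is actually the internally consistent one given the convention $Qr_3=r_1$ (the paper's $V_3$ and its claimed identity for $QV_1$ carry a sign typo), and your explicit localization at $\tau_n$ is a harmless extra precaution.
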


\begin{proof}
Let $V_2(z)=1+c_2x+c_1y$
and
$$
V_3(z,w):=xr_3(w)\dfrac{\partial V_2(z)}{\partial x}+yr_4(w)\dfrac{\partial V_2(z)}{\partial y}.
$$
We can find a $K_4>0$ satisfying
\begin{equation}\label{e5-lm3.3}
\left|V_3(z,w)\right|\leq K_4V_2(z),\, (z,w)\in\R^{2,\circ}_+\times\Sc.
\end{equation}
and
\begin{equation}\label{e6-lm3.3}
\left|\dfrac{\partial V_3(z,w)}{\partial z}\cdot F(z,w)\right|\leq K_4V_2(z),\, (z,w)\in\R^{2,\circ}_+\times\Sc.
\end{equation}
\begin{equation}\label{e7-lm3.3}
\left|\dfrac{\partial V_3(z,w)}{\partial z}\cdot G(z,u)\right|\leq K_4(1+|z|)V_2(z),\, (z,w)\in\R^{2,\circ}_+\times\Sc, u\in\M.
\end{equation}
We have
$$\dfrac{\partial V_2(z)}{\partial z}\cdot F(z,u)=
c_2x\big[a_1-b_1x\big] +c_1y\big[a_2-h(y)u-b_2y\big].$$
Let $\beta\in\big(0, (c_2b_1)\wedge(c_1b_2)\big)$.
Clearly, we can choose a $K_5>0$ such that
\begin{equation}\label{e4-lm3.3}
\dfrac{\partial V_2(z)}{\partial z}\cdot F(z,u)\leq K_5-V_2(z)-\beta(x^2+y^2)\,\forall (x,y)\in\R^2_+, u\in[0,M].
\end{equation}
Let
$$V_2^\eps(z,w)=V_2(z)+\eps V_3(z,w)$$
Similar to \eqref{e3.7},
 from \eqref{e5-lm3.3}, \eqref{e6-lm3.3}, and \eqref{e7-lm3.3}, we have
$$\op^\eps_u V^\eps_3(z,w)\leq 2K_5-\dfrac{\beta
|z|^2}2$$
for sufficiently small $\eps$.
As a result,
\begin{equation}\label{e3-lm3.3}
\begin{aligned}
\E_{z,w} V^\eps_3(Z^\eps(t),\xi^\eps(t))=&V^\eps_3(z,w)+\E_{z,w}\int_0^t\op^\eps_{\bar m_t} V^\eps_3(Z^\eps(ds),\xi^\eps(s))ds\\
\leq&V^\eps_3(z,w)+2K_5t-\dfrac\beta2\int_0^t\E_{z,w} |Z^\eps(t)|^2,
\end{aligned}
\end{equation}
which leads to
$$
\dfrac\beta2\int_0^t\E_{z,w} |Z^\eps(t)|^2
\leq V^\eps_3(z,w)+2K_5t
$$
The first claim of the lemma follows directly from the above estimate.
The second claim can be derived by
applying It\^o's formula for $V_2(z)$ to \eqref{model-5}
and then proceeding like \eqref{e3-lm3.3}.
\end{proof}

\begin{lm}\label{lm3.4}
There is a $\tilde K>0$ such that
$$\bigg|\E_{z,w}\big[\ln V(Z^\eps(T))\big]-\ln V(z)-
\E_{z,w}\int_0^T\op_{\bar m_t}\ln V(Z^\eps(t), \xi^\eps(t))dt\bigg|\leq\tilde K(1+T)\eps.$$
for any admissible relaxed control $m(\cdot)$.
\end{lm}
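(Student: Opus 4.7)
I would follow the perturbed test function (averaging) method, in the spirit of the constructions of $V^\eps$ and $\tilde V^\eps$ used earlier in the paper. The aim is to build a corrected function $G^\eps(z,w) = \ln V(z) + \eps \Psi_1(z,w) + \eps^2 \Psi_2(z,w)$ with the two properties (i) $|G^\eps - \ln V|\le C\eps$ pointwise, and (ii) $\op^\eps_u G^\eps(z,w) = \op_u \ln V(z) + \eps R(z,w,u)$ with a residual $R$ of at most quadratic growth in $z$. Dynkin's formula applied to $G^\eps$ along $(Z^\eps,\xi^\eps)$, combined with the second-moment bound of Lemma \ref{lm3.3}, will then yield the claim.

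To construct $\Psi_1$, observe that since $\ln V$ is $w$-independent, $Q\ln V=0$, so the singular $1/\eps$ term in $\op^\eps_u \ln V$ is $\tfrac{1}{\eps}\nabla \ln V(z)\cdot F(z,w)$. Because $F(z,\cdot)$ is $\bar P$-centered, I set
$$\Psi_1(z,w)=xr_3(w)\partial_x \ln V(z)+yr_4(w)\partial_y \ln V(z)$$
(up to a sign) so that $Q\Psi_1=-\nabla \ln V\cdot F$, cancelling the singular term. Next, $\Psi_2$ is chosen to absorb the $w$-fluctuating part of $\nabla \Psi_1\cdot F$ via a further Poisson equation; by the definition of $A=(a_{ij})$ in the formulation, the $\bar P$-average $\int_\Sc \nabla\Psi_1(z,w)\cdot F(z,w)\,\bar P(dw)$ reproduces exactly the diffusion second-order part of $\op_u\ln V$ together with the drift correction $\bar a_i-a_i=a_{ii}/2$. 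The explicit form $V=(1+c_2x+c_1y)/(x^{p_1}y^{p_2})$ keeps all of $x\partial_x\ln V$, $y\partial_y\ln V$, $x^2\partial_x^2\ln V$, $xy\partial_x\partial_y\ln V$, $y^2\partial_y^2\ln V$ uniformly bounded on $\R^{2,\circ}_+$; hence both $\Psi_1$ and $\Psi_2$ are bounded, and a direct computation gives $R(z,w,u)=\nabla\Psi_1\cdot G+\nabla\Psi_2\cdot F+\eps\nabla\Psi_2\cdot G$ with $|R(z,w,u)|\le C(1+|z|^2)$.

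Dynkin's formula for $G^\eps$ reads
$$\E_{z,w} G^\eps(Z^\eps(T),\xi^\eps(T))-G^\eps(z,w)=\E_{z,w}\int_0^T\op^\eps_{\bar m_t}G^\eps(Z^\eps(t),\xi^\eps(t))dt.$$
Adding and subtracting $\ln V(Z^\eps(T))$ and $\op_{\bar m_t}\ln V(Z^\eps(t))$, using $|G^\eps-\ln V|\le C\eps$ pointwise for the two boundary terms, and invoking the bound $|R|\le C(1+|z|^2)$ together with Lemma \ref{lm3.3} (which gives $\E_{z,w}\int_0^T|Z^\eps(t)|^2dt\le \hat K(1+|z|+T)$) for the residual integral, the three error contributions are respectively $O(\eps)$, $O(\eps)$, and $O(\eps(1+T))$, which combine to the desired $\tilde K(1+T)\eps$.

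The main obstacle is controlling the growth in $z$ of the Poisson correctors $\Psi_1,\Psi_2$ and their first-order derivatives, despite $\ln V$ being singular on the coordinate axes and unbounded at infinity. What rescues the argument is that $F(z,w)$ and $G(z,u)$ each factor out the coordinates $x$ and $y$, so the natural pairings $\nabla\Psi_i\cdot F$ and $\nabla\Psi_i\cdot G$ reduce to ``coordinate $\times$ derivative'' combinations of $\ln V$; these are precisely the ones that happen to be bounded by the explicit algebraic form of $V$, making the quadratic-growth estimate on $R$ tight enough to be absorbed by Lemma \ref{lm3.3}.
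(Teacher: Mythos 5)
Your proposal follows essentially the same route as the paper: build a two-term perturbed test function $\ln V + \eps g_1 + \eps^2 g_2$ whose correctors kill the $1/\eps$ term via a Poisson equation and reproduce $\op_u\ln V$ at order one, then apply Dynkin's formula. Your $\Psi_1 = xr_3\partial_x\ln V + yr_4\partial_y\ln V$ is, up to sign, exactly the paper's $g_1(z,w)=\int_{\Sc}\chi(w,d\tilde w)\nabla\ln V(z)\cdot F(z,\tilde w)$, since $r_3,r_4$ are the Poisson solutions for $r_1,r_2$; likewise $\Psi_2$ matches $g_2$. You correctly identify that the explicit algebraic form of $V$ makes $x\partial_x\ln V$, $y\partial_y\ln V$, $x^2\partial^2_x\ln V$, $xy\partial_x\partial_y\ln V$, $y^2\partial^2_y\ln V$ bounded on $\R^{2,\circ}_+$, so that $\Psi_1$, $\Psi_2$ are bounded functions; this is the same observation the paper uses to argue boundedness of $g_1,g_2$ and hence the $O(\eps)$ discrepancy between $G^\eps$ and $\ln V$ at the endpoints.

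The one place you diverge, and arguably do so more carefully than the paper, is the treatment of the residual $R=\nabla\Psi_1\cdot G+\nabla\Psi_2\cdot F+\eps\nabla\Psi_2\cdot G$. The paper invokes \cite[Formula (4.21)]{BP} and asserts this residual is bounded by a constant $K_6$, so the integral contributes $K_6T\eps$ directly. You instead bound $|R|\le C(1+|z|^2)$ and call on Lemma~\ref{lm3.3} to control $\E_{z,w}\int_0^T|Z^\eps(t)|^2\,dt\le\hat K(1+|z|+T)$. Since $\nabla\Psi_1\cdot G=(x\partial_x\Psi_1)(a_1-b_1x-c_1y)+(y\partial_y\Psi_1)(a_2-\cdots)$, and the coordinate-times-derivative factors are bounded while the bracketed drifts grow linearly in $z$, the residual genuinely has linear growth in $z$ (your quadratic bound is conservative but safe). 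So your route to absorb the $z$-growth via Lemma~\ref{lm3.3} is the right way to close this gap; in applications the initial condition $z$ lies in the compact set $D^\circ_{\delta,H}$, so the $|z|$ dependence is harmlessly absorbed into $\tilde K$. Overall the proof is sound and equivalent in substance, with your handling of the residual being the more defensible variant.
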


\begin{proof}
Let $$g_1(z,w)=\int_\Sc\chi(w, d\tilde w)\dfrac{\partial(\ln V(z))}{\partial z}\cdot F(z,\tilde w),$$
and
$$g_2(z,w)=\int_\Sc\chi(w, d\tilde w)\left[\dfrac{\partial g_1(z,w)}{\partial z}F(x, \tilde w)+\dfrac{\partial(\ln V(z))}{\partial z}\cdot G(x, u)-\op_u \ln V(z)\right].$$
Note that
$g_2$ does not depend on $u$ since there is no $u$ dependence in
$$\dfrac{\partial(\ln V(z))}{\partial z}\cdot G(x, u)-\op_u \ln V(z)=\dfrac12\dfrac{a_{11}c_2^2x^2+a_{22}c_1^2y^2+2a_{12}c_1c_2xy}{(1+c_2x+c_1y)^2}-\dfrac{c_2xa_{11}+c_1y a_{22}}{1+c_2x+c_1y}.
$$
Moreover, direct calculations show that
$\dfrac{\partial(\ln V(z))}{\partial z}\cdot F(z,w)
$
and
$\dfrac{g_1(z,w)}{\partial z}\cdot F(x,  y)$ are bounded along with $\dfrac{\partial(\ln V(z))}{\partial z}\cdot G(x, u)-\op_u \ln V(z)$.
Consequently,
$g_i(z,w), i=1,2$ are also bounded in $\R^{2,\circ}_+\times\Sc$.
As a result,
we have from \cite[Formula (4.21)]{BP} that
$$
\Big|\op^\eps_u[\ln V(z)+\eps g_1(z,w)+\eps^2g_2(z,w)]
-\op_u \ln V(z)\Big|\leq K_6\eps\,\text{ for all }\, (z,w)\in\R^{2,\circ}_+\times\Sc
$$
for some constant $K_6>0$ independent of $m$.
Combining this and the equality
$$
\begin{aligned}
\E_{z,w}&\big[\ln V(Z^\eps(T))+\eps g_1(Z^\eps(T),\xi^\eps(T))+\eps^2g_2(Z^\eps(T),\xi^\eps(T))\big]\\
=&\ln V(z)+\eps g_1(z,w)+\eps^2g_2(z,w)\\
&+\E_{z,w}\int_0^T \op^\eps_{\bar m_t}[\ln V(Z^\eps(t))+\eps g_1(Z^\eps(t),\xi^\eps(t))+\eps^2g_2(Z^\eps(t),\xi^\eps(t))]dt,
\end{aligned}
$$
we obtain
$$
\begin{aligned}
\bigg|&\E_{z,w}\big[\ln V(Z^\eps(T))+\eps g_1(Z^\eps(T),\xi^\eps(T))+\eps^2g_2(Z^\eps(T),\xi^\eps(t))\big]\\
&-\ln V(z)-\eps g_1(z,w)-\eps^2g_2(z,w)-\E_{z,w}\int_0^T\op_{\bar m_t}[\ln V(Z^\eps(t), \xi^\eps(t))]dt\bigg|\leq K_6T\eps.
\end{aligned}
$$
By the boundedness of $g_i(z,w), i=1,2$, we deduce that
$$\bigg|\E_{z,w}\big[\ln V(Z^\eps(T))\big]-\ln V(z)-\E_{z,w}\int_0^T\op_{\bar m_t}[\ln V(Z^\eps(t), \xi^\eps(t))]dt\bigg|\leq (K_6T+K_7)\eps.
$$
for some $K_7>0$.
The lemma is therefore proved.
\end{proof}

Define $f,g:\R^2_+\mapsto\R$ by
\begin{equation}\label{e-f}
f(x,y)=p_1\big(a_1-b_1x-c_1y\big)+p_2\big(-a_2-b_2y+c_2x\big)
\end{equation}
and
\begin{equation}\label{e-g}
\begin{aligned}
g(x,y)
=&\dfrac{c_2x(\bar a_1-b_1x)+c_1y(\bar a_2-b_2y)}{1+c_2x+c_1y}-\dfrac12\dfrac{a_{11}c_2^2x^2+a_{22}c_1^2y^2+2a_{12}c_1c_2xy}{(1+c_2x+c_1y)^2}.
\end{aligned}
\end{equation}

\begin{lm}\label{lm3.5}
For any $H>0$ and  $k_0>1$, there exist $T_1=T_1(H,\eps_0,k_0)>0$ and $\delta=\delta(H,\eps_0,k_0)>0$
such that for any admissible control $m(\cdot)$,
and $z\in D_{\delta,H}:=([0,H]\times[0,\delta])\cup([0,\delta]\times[0,H])$,
we have
$$
\dfrac1t \int_0^t\E_{z}f(Z(s))ds>9\lambda,\,\text{ and }\,
\dfrac1t \int_0^t\E_{z} g(Z(s))ds\leq\lambda,\,\forall\,t\in[T_1,T_2],$$
and
$$\dfrac1t \int_0^t\E_{z} h(Y(s))ds\leq\dfrac{\lambda}{p_2M},\,\forall\,t\in[T_1,T_2]$$
where $T_2=(k_0+1)T_1$ and $\lambda$ is defined in \eqref{e-lambda}.
\end{lm}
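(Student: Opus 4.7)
The plan is to reduce, via a compactness argument, to a direct analysis of the dynamics on the boundary of $\R^2_+$. For $z \in D_{\delta,H}$, the starting point lies within $\delta$ of either the face $F_1 := [0,H]\times\{0\}$ or the face $F_2 := \{0\}\times[0,H]$; on each face the two-dimensional dynamics \eqref{model-5} degenerates to a one-dimensional SDE that can be analysed explicitly, and the three stated averages involve functions whose behaviour on each face is understood from this reduction.

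On $F_1$, since $Y\equiv 0$, the control drops out and $X(\cdot)$ follows the logistic SDE $dX = X(\bar a_1 - b_1 X)dt + X(\sigma_{11}dW_1 + \sigma_{12}dW_2)$, with a unique invariant distribution $\mu_1^*$ satisfying $\int x\,\mu_1^*(dx) = a_1/b_1$. The restriction $g(\cdot,0)$ equals the generator of this reduced SDE applied to $\ln(1+c_2\,\cdot\,)$, so $\int g(x,0)\,\mu_1^*(dx) = 0$; a direct computation gives $\int f(x,0)\,\mu_1^*(dx) = p_2(-a_2 + c_2 a_1/b_1) \geq 11\lambda$ by \eqref{e-lambda}; and $h(Y(s)) \equiv 0$ since $h(0)=0$. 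Uniform ergodicity of the logistic SDE over the compact family $x_0 \in [0,H]$ then yields a $T_1^{(1)}$ beyond which the three desired time-averaged bounds hold with a positive margin, uniformly for $t\in[T_1^{(1)},T_2]$.

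On $F_2$, where $X\equiv 0$, the reduced equation for $Y$ is a controlled one-dimensional SDE, and uniformity over the admissible control $m(\cdot)$ is the delicate issue. Here I would combine (i) the uniform second-moment bound of Lemma \ref{lm3.3}, giving $\tfrac1t\int_0^t \E Y(s)^2\,ds$ bounded uniformly in $m$ on $[T_1,T_2]$, (ii) the smallness of $p_1,p_2$ inherited from \eqref{e3.1}, which makes the coefficient $(p_1c_1+p_2b_2)$ of $y$ in $f(0,y)$ small relative to $\lambda$, and (iii) Dynkin's identity for $\ln(1+c_1 Y(\cdot))$, which converts a bound on $\tfrac1t\int_0^t \E h(Y(s))\,ds$ into a bound on $\tfrac1t\int_0^t \E g(0,Y(s))\,ds$ modulo an $O(1/t)$ boundary term. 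A self-consistent bootstrap between the three stated inequalities then yields a $T_1^{(2)}$ for which the bounds hold on $F_2$ uniformly in $m$.

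With both boundary analyses in place, I would extend to all $z\in D_{\delta,H}$ by contradiction. If no $\delta$ works for the chosen $T_1=\max(T_1^{(1)},T_1^{(2)})$, extract sequences $z_n\in D_{1/n,H}$, admissible relaxed controls $m^{(n)}$, and $t_n\in[T_1,T_2]$ violating one of the bounds. Using Lemma \ref{lm3.3} and the Prohorov-tightness of relaxed controls, together with the SDE analogue of Lemma \ref{lm2.2}, pass to a subsequence with $z_n\to z_*\in F_1\cup F_2$, $m^{(n)}\to m^*$, $t_n\to t_*$, and weakly convergent processes $Z^{(n)}\Rightarrow Z^*$ solving \eqref{model-5} with data $(z_*,m^*)$. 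Since $f,g$ are continuous of at most quadratic growth and $h$ is bounded, the uniform second-moment bound provides uniform integrability, and passing to the limit in the time averages produces a failing bound on the boundary, contradicting the earlier steps. The principal obstacle will be the $F_2$ analysis: because arbitrary controls can substantially alter the long-run law of $Y$, no direct ergodic theorem applies to $Y(\cdot)$ there, and the three stated inequalities must be coupled via Dynkin's identity and the specific structure of $\lambda$ from \eqref{e-lambda} to close the argument uniformly in $m$.
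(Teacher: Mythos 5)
Your proposal follows a genuinely different route from the paper. You reduce to the one-dimensional boundary faces $F_1$, $F_2$ and then pass to the full set $D_{\delta,H}$ by a weak-compactness/contradiction argument. The paper instead constructs explicit, control-free comparison processes $(\tilde X,\tilde Y)$ solving \eqref{tilde-z} on all of $[0,H]^2$ -- $\tilde X$ the logistic SDE, $\tilde Y$ a decaying geometric diffusion with no harvest term -- and proves quantitative closeness estimates $\E_z|X(t)-\tilde X(t)|\le\tilde\kappa_T\sqrt{y}$ and $\E_z Y(t)\le\E_z\tilde Y(t)+\tilde\kappa_T x$ (Lemma \ref{lm-a4}). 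Once the time-averaged bounds \eqref{e1-a3}--\eqref{e2-a3} are obtained for $(\tilde X,\tilde Y)$ with strict margin ($10\lambda$, $\lambda/2$, etc.), those closeness bounds yield Lemma \ref{lm3.5} with an effective $\delta$, simultaneously for all admissible $m$, with no diagonal extraction. Your $F_1$ analysis -- ergodicity of the logistic SDE, $\int f(\cdot,0)\,d\mu^*=p_2(-a_2+c_2a_1/b_1)$ and $\int g(\cdot,0)\,d\mu^*=0$ -- matches the substance of the paper's \eqref{e8-a3} and \eqref{e7-a3}, and the compactness step is plausible given Lemmas \ref{lm3.3} and \ref{lm2.2}.

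The $F_2$ step, however, has a genuine gap, and you correctly flag it as the crux. Your worry that "arbitrary controls can substantially alter the long-run law of $Y$" overlooks the sign structure of the problem: with $X\equiv 0$, the harvest term $-h(Y)\bar m_t\le 0$ can only \emph{lower} the drift of $Y$, so by a standard comparison theorem the controlled $Y$ is dominated by the uncontrolled solution, which decays to $0$ from any $y\in[0,H]$. This monotonicity is precisely what the paper encodes in the choice of $\tilde Y$ and exploits via the stopped comparison in Lemma \ref{lm-a4}; it is what gives uniformity over $m$ and makes a bootstrap unnecessary, since once $\frac1t\int_0^t\E Y\,ds$ is small uniformly in $m$, the Lipschitz bounds \eqref{e.l1}--\eqref{e.l2} deliver all three inequalities on $F_2$ at once. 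As sketched, your bootstrap does not close on its own: Dynkin's identity for $\ln(1+c_1Y)$ on $F_2$ gives
\[
\frac1t\int_0^t\E\,g(0,Y(s))\,ds \;=\; \frac1t\Big[\E\ln(1+c_1Y(t))-\ln(1+c_1y)\Big]\;+\;\frac1t\int_0^t\E\,\frac{c_1Y(s)h(Y(s))\bar m_s}{1+c_1Y(s)}\,ds,
\]
which ties $g$ to a control-weighted variant of $h$ together with a boundary term that is not obviously small uniformly in $m$. Without the domination observation, the coupled inequalities do not self-resolve, and the argument does not go through.
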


The results in this lemma are obtained by analyzing the behavior
of $Z(t)$ near the boundary.
The proof is postponed to the appendix.


\begin{lm}\label{lm3.6}
With $H, k_0, T_1, T_2,\delta$ as given in Lemma \ref{lm3.5},
there is an $\eps_3>0,\theta\in(0,1)$ such that
for any $\eps\in(0,\eps_3)$.
Let $D^\circ_{\delta,H}=((0,H]\times(0,\delta])\cup((0,\delta]\times(0,H])$.
For any admissible control $m(\cdot)$, $(z,w)\in D_{\delta,H}\times\Sc$,
we have
$$\E_{z,w} \left[V^\eps(Z^\eps(t),\xi^\eps(t))\right]^\theta
\leq e^{-\lambda\theta t}[V^\eps(z,w)]^\theta,  t\in[T_1,T_2].$$
\end{lm}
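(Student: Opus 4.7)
The strategy is to (i) establish the logarithmic decay $\E_{z,w}\ln V(Z^\eps(T))\le \ln V(z)-6\lambda T$ on $[T_1,T_2]$ via Lemmas 3.4 and 3.5, and then (ii) upgrade this mean estimate to the $\theta$-th moment by exploiting that the quadratic-variation density of $\ln V$ along the limit diffusion is uniformly bounded on $\R^{2,\circ}_+$. Throughout, \eqref{e3.6} lets us swap $V^\eps$ for $V$ up to a $(1\pm\eps K_2)^\theta$ factor that will be absorbed into $e^{-\lambda\theta t}$ once $\eps$ is small.

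A direct computation with the limit generator yields $\op_u\ln V(z)=-f(z)+g(z)+h(y)u\big(p_2-c_1y/(1+c_2x+c_1y)\big)$; since $c_1y/(1+c_2x+c_1y)\ge 0$ and $u\in[0,M]$, this is bounded above by $-f(z)+g(z)+p_2Mh(y)$. Applying Lemma 3.4 to $\ln V$ gives
\[
\E_{z,w}\ln V(Z^\eps(T))\le \ln V(z)+\E_{z,w}\!\int_0^T\![-f+g+p_2Mh](Z^\eps(s))\,ds+\tilde K(1+T)\eps.
\]
Now replace $Z^\eps$ by the limit diffusion $Z$ inside the integral: Lemma 2.2 provides weak convergence on the finite interval $[0,T_2]$, while Lemma 3.3 supplies the $L^2$-uniform integrability required to accommodate the at-most-linear growth of $f$ and $g$ (with $h\le 1$ bounded), so the integral differs from its $Z$-analogue by $o_\eps(1)T$. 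Lemma 3.5 then bounds the $Z$-integral by $-9\lambda T+\lambda T+p_2M\cdot\tfrac{\lambda}{p_2M}T=-7\lambda T$ uniformly in $z\in D_{\delta,H}$ and admissible $m$, yielding $\E_{z,w}\ln V(Z^\eps(T))\le \ln V(z)-6\lambda T$ for all $T\in[T_1,T_2]$ once $\eps$ is sufficiently small.

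For the upgrade, observe that the products $x\partial_x\ln V=c_2x/(1+c_2x+c_1y)-p_1$ and $y\partial_y\ln V=c_1y/(1+c_2x+c_1y)-p_2$ are both bounded on $\R^{2,\circ}_+$, so the quadratic-variation density $\mathcal Q(z):=a_{11}(x\partial_x\ln V)^2+2a_{12}(x\partial_x\ln V)(y\partial_y\ln V)+a_{22}(y\partial_y\ln V)^2$ is uniformly bounded by some constant $C_1$. Repeating the perturbed-Lyapunov derivation underlying Lemma 3.4 with $V^\theta$ in place of $\ln V$ (the corrector being $\theta V^{\theta-1}V_1$, which cancels the $1/\eps$ singularity in $\op^\eps_u V^\theta$), and noting that $\op_u V^\theta=V^\theta\big[\theta\,\op_u\ln V+(\theta^2/2)\mathcal Q\big]\le V^\theta\big[\theta(-f+g+p_2Mh)+\theta^2 C_1/2\big]$, a Gronwall-type argument in $t\mapsto\E V(Z^\eps(t))^\theta$, combined with the averaging of the previous paragraph now carried out with the $V^\theta$ weight (whose variation over the short horizon $[0,T_2]$ is controlled via \eqref{e3.11}), delivers
\[
\E_{z,w}V(Z^\eps(T))^\theta\le V(z)^\theta\exp\!\Big(-6\lambda\theta T+\tfrac{C_1}{2}\theta^2T+o_\eps(1)\Big).
\]
Choosing $\theta\in(0,\lambda/C_1)$ makes the $\theta^2$ term at most $\lambda\theta T/2$, leaving margin of order $5\lambda\theta T$ that absorbs both the $o_\eps(1)$ error and the $(1\pm\eps K_2)^\theta$ factor from \eqref{e3.6} when $\eps<\eps_3$, yielding $\E_{z,w}[V^\eps(Z^\eps(t),\xi^\eps(t))]^\theta\le e^{-\lambda\theta t}[V^\eps(z,w)]^\theta$ on $[T_1,T_2]$.

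The main obstacle is the Gronwall/weighted-averaging step: Lemma 3.5 controls only the unweighted average $\E\int_0^T\op_u\ln V\,ds\le -7\lambda T$, whereas the $\theta$-moment computation requires controlling the weighted integral $\E\int_0^T V^\theta\op_u\ln V\,ds$. The short horizon $T\le T_2=(k_0+1)T_1$, the uniform bound $\mathcal Q\le C_1$ playing the role of a sub-Gaussian variance proxy, and the growth estimate \eqref{e3.11} for $V^\eps$ are the tools that make this exchange quantitative. The structural fact that the $p_i/z_i$ singularities in $\nabla\ln V$ are exactly neutralized by the multiplicative factor $z_i$ in the Lotka--Volterra diffusion coefficients, so that $\mathcal Q$ is globally bounded, is the key miracle we exploit.
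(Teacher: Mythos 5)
Your first half — the logarithmic decay $\E_{z,w}\ln V(Z^\eps(t))-\ln V(z)\le -c\lambda t$ on $[T_1,T_2]$, obtained by combining the bound $\op_u\ln V\le -f+g+p_2Mh(y)$ with Lemma~\ref{lm3.5}, the weak convergence, the uniform integrability of Lemma~\ref{lm3.3}, and the perturbed-test-function estimate of Lemma~\ref{lm3.4} — is essentially the paper's argument (the paper lands on $-3\lambda$ rather than $-6\lambda$ but the mechanism is identical).

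The upgrade from the mean estimate to the $\theta$-moment has a genuine gap, and it is exactly the one you name in your last paragraph but do not resolve. Writing $\op_u V^\theta=V^\theta[\theta\,\op_u\ln V+\tfrac{\theta^2}{2}\mathcal Q]$ and applying Dynkin/Gronwall leaves you with $\E\int_0^T V^\theta(Z^\eps(s))\,\op_u\ln V(Z^\eps(s))\,ds$, and there is no way to pass from the unweighted average control of Lemma~\ref{lm3.5} to this $V^\theta$-weighted integral: the coefficient $-f+g+p_2Mh$ is unbounded and sign-indefinite pointwise, so no pointwise Gronwall bound is available, while the average bound does not commute with the random weight $V^\theta(Z^\eps(s))$. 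The paper avoids the issue entirely by working with the single random variable $\Upsilon^\eps(t)=\ln V^\eps(Z^\eps(t),\xi^\eps(t))-\ln V^\eps(z,w)$ at each fixed $t\in[T_1,T_2]$: it bounds \emph{both} exponential moments $\E e^{\Upsilon^\eps(t)}$ (via \eqref{e3.11}) \emph{and} $\E e^{-\Upsilon^\eps(t)}$ (this is precisely what Lemma~\ref{lm3.2} supplies and what your proposal omits), and then applies the log-Laplace inequality of Lemma~\ref{lm-a0}, $\ln\E e^{\theta\Upsilon}\le\theta\E\Upsilon+\theta^2\hat K_2$, which is the clean quantitative version of your ``sub-Gaussian variance proxy'' intuition. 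The boundedness of $\mathcal Q$ that you identify is the right structural fact and is indeed what makes these exponential moments finite, but it feeds into a Laplace-transform argument on $\Upsilon^\eps(t)$, not into a Gronwall argument on $\E V^\theta$; as written, your proposed route does not go through.
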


\begin{proof}
Since $D_{\delta,H}$ is a compact set, by virtue of Lemma \ref{lm3.5}
and \cite[Theorem 5]{KR},
(which tell us we can approximate solutions to \eqref{model-4}
by the corresponding solutions to \eqref{model-5}),
there is an $\eps_2>0$ such that
for any $\eps\in(0,\eps_2)$,
and
for any admissible control $m(\cdot)$, $(z,w)\in D_{\delta,H}\times\Sc$,
we have
\begin{equation}\label{e1-lm3.5}
\dfrac1t \int_0^t\E_{z,w}f(Z^\eps(s))ds>8\lambda,\,\, t\in[T_1,T_2],
\end{equation}
\begin{equation}\label{e2-lm3.5}
\dfrac1t \int_0^t\E_{z,w}g(Z^\eps(s))ds<2\lambda,\,\, t\in[T_1,T_2],
\end{equation}
and
\begin{equation}\label{e3-lm3.5}
\dfrac1t \int_0^t\E_{z} h(Y^\eps(s))ds\leq2\dfrac{\lambda}{p_2M},\,\forall\,t\in[T_1,T_2].
\end{equation}
Note that $f$ and $g$ are not bounded. Thus
 \eqref{e1-lm3.5} and \eqref{e2-lm3.5} do not follow
from the weak convergence of $Z^\eps(\cdot)$ to $Z(\cdot)$.
However, $f$ and $g$ have linear growth rates.
Thus, \eqref{e1-lm3.5} and \eqref{e2-lm3.5}
can still be obtained
from the uniform integrability in Lemma \ref{lm3.3}
combined with the weak convergence.

On the other hand,
\begin{equation}\label{e4-lm3.5}
\begin{aligned}
\op_u \ln V(z,w)=&-f(z)+g(z)-\dfrac{c_1yh(y)u}{1+c_2x+c_1y}+p_2h(y)u\\
\leq&-f(z)+g(z)+Mp_2h(y).
\end{aligned}
\end{equation}
It follows
from \eqref{e1-lm3.5}, \eqref{e2-lm3.5}, \eqref{e3-lm3.5},
and \eqref{e4-lm3.5} that
\begin{equation}\label{e5-lm3.5}
\begin{aligned}
\dfrac1t \int_0^t\E_{z,w} \op_{\bar m_t} \ln V(Z^\eps(s),\xi^\eps(s))ds
\leq -4\lambda,\,\, t\in[T_1,T_2], z\in D^\circ_{\delta,H},\eps<\eps_2
\end{aligned}
\end{equation}
for any admissible control.
In view of \eqref{e5-lm3.5} and Lemma \ref{lm3.4},
when $\eps$ is sufficiently small, we have
\begin{equation}\label{e3-lm3.6}
\E_{z,w}\big[\ln V(Z^\eps(t))\big]-\ln V(z)\leq -3\lambda t,\,\, t\in [T,k_0 T],z\in D^\circ_{\delta,H}.
\end{equation}
Combining \eqref{e3-lm3.6} and \eqref{e3.6},
we have that
$$\E_{z,w}\big[\ln V^\eps(Z^\eps(t))\big]-\ln V^\eps(z)\leq -2\lambda t,\,\, t\in [T,k_0 T],z\in D^\circ_{\delta,H}$$
if $\eps$ is sufficiently small.
Let
$$\Upsilon^\eps_{z,w}(t)=\ln V^\eps(Z^\eps(t),\xi^\eps(t))-\ln V^\eps(Z^\eps(0),\xi^\eps(0)).$$
By \eqref{e3.11} and Lemma \ref{lm3.2},
there is a $\hat K$ depending only on $T_1,T_2$ and $H$ such that
$$\max\Big\{\E_{z,w} \exp(-\Upsilon^\eps(t)), \E_{z,w} \exp(\Upsilon^\eps(t))\Big\}<\hat K,\, z\in D^\circ_{\delta,H},w\in\Sc, t\in[T_1,T_2]$$
for any admissible control.
By Lemma \ref{lm-a0}, there is a $\hat K_2>0$ such that
$$
\begin{aligned}
\ln\left(\E_{z,w} \left[\dfrac{V^\eps(Z^\eps(t),\xi^\eps(t))}{V(z,w)}\right]^\theta\right)=
&\ln\left(\E_{z,w} \exp(\theta\Upsilon^\eps(t))\right)\\
\leq& \theta\E_{z,w} \Upsilon^\eps(t)+\theta^2 \hat K_2\\
\leq& -2\lambda\theta t+\theta^2\hat K_2,\,\,\,\,(z,w)\in D^\circ_{\delta,H}\times \Sc, t\in[T_1,T_2], \theta\in[0,0.5].
\end{aligned}
$$
Letting $\theta=\lambda T_1[\hat K_2]^{-1}\wedge 0.5$,
we have
$$\E_{z,w} \left[V^\eps(Z^\eps(t),\xi^\eps(t))\right]^\theta
\leq e^{-\lambda\theta t}[V^\eps(z,w)]^\theta, (z,w)\in D^\circ_{\delta,H}\times \Sc, t\in[T_1,T_2].$$
\end{proof}

\begin{lm}\label{lm3.7}
Let $\theta$ satisfy the conclusion of Lemma {\rm\ref{lm3.6}}.
There are $q\in(0,1)$ and  $C>0$ such that
$$
\E_{z,w} \left[V^\eps(Z^\eps(T_2),\xi^\eps(T_2))\right]^\theta
\leq q [V^\eps(z,w)]^\theta +C,
$$
for any relaxed Markov control $u^\eps\in {PM}^\eps$ when $\eps$ is sufficiently small.
\end{lm}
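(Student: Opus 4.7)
My plan is to partition $\R^{2,\circ}_+$ into three regions reflecting where $V^\eps$ is bounded versus large, treat each region separately, and patch the pieces together using the strong Markov property of $(Z^\eps,\xi^\eps)$. Let $B_1=\{z\in\R^{2,\circ}_+:|z|>H\}$, $B_2=D^\circ_{\delta,H}$, and $B_3=\{z:|z|\le H,\,x\ge\delta,\,y\ge\delta\}$; together with the fact that $D^\circ_{\delta,H}\subset\{|z|\le H\}$ for $\delta<H$, these exhaust $\R^{2,\circ}_+$. On $B_3$, the Lyapunov function $V^\eps$ is bounded by an $\eps$-independent constant, so applying \eqref{e3.11} together with Jensen's inequality (concavity of $x\mapsto x^\theta$) yields $\E_{z,w}[V^\eps(Z^\eps(T_2),\xi^\eps(T_2))]^\theta\le C_*$ for a constant $C_*$ absorbable into $C$. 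On $B_2$, Lemma \ref{lm3.6} at $t=T_2$ gives at once $\E_{z,w}[V^\eps(Z^\eps(T_2))]^\theta\le e^{-\lambda\theta T_2}V^\eps(z,w)^\theta$, the desired contraction.

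The main work is for $z\in B_1$. Introduce $\tau^\eps:=\inf\{s\ge 0:|Z^\eps(s)|\le H\}$; by path-continuity of $Z^\eps$ one has $|Z^\eps(\tau^\eps)|=H$ on $\{\tau^\eps<\infty\}$. Because \eqref{e3.7} gives $\op^\eps_u V^\eps\le -V^\eps$ on $B_1$, Jensen applied to \eqref{e3.12} produces the key estimate
$$\E_{z,w}\bigl[e^{\theta(T_2\wedge\tau^\eps)}V^\eps(Z^\eps(T_2\wedge\tau^\eps),\xi^\eps(T_2\wedge\tau^\eps))^\theta\bigr]\le V^\eps(z,w)^\theta.$$
Decompose $\E_{z,w}[V^\eps(Z^\eps(T_2))]^\theta$ over the four events (a) $\{\tau^\eps>T_2\}$, (b) $\{\tau^\eps\le T_2,\,Z^\eps(\tau^\eps)\in B_3\}$, (c) $\{\tau^\eps\le T_2-T_1,\,Z^\eps(\tau^\eps)\in B_2\}$, (d) $\{T_2-T_1<\tau^\eps\le T_2,\,Z^\eps(\tau^\eps)\in B_2\}$. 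On (a), the displayed inequality specialised to $T_2\wedge\tau^\eps=T_2$ gives $e^{-\theta T_2}V^\eps(z,w)^\theta$. On (b), the strong Markov property at $\tau^\eps$ reduces the inner expectation to the $B_3$ estimate and yields a constant contribution. On (c), the residual time $T_2-\tau^\eps\in[T_1,T_2]$, so strong Markov plus Lemma \ref{lm3.6} gives a factor $e^{-\lambda\theta T_1}$; combining with $\E[V^\eps(Z^\eps(\tau^\eps))^\theta\1_{\tau^\eps\le T_2}]\le V^\eps(z,w)^\theta$ (from the displayed inequality and $e^{\theta\tau^\eps}\ge 1$) yields $e^{-\lambda\theta T_1}V^\eps(z,w)^\theta$. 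On the delicate case (d), the residual time is below $T_1$ so Lemma \ref{lm3.6} is unavailable, and the crude growth bound \eqref{e3.11} contributes $e^{(H_1+2)T_1\theta}$ on $[\tau^\eps,T_2]$; pairing with the decay $e^{-\theta(T_2-T_1)}$ extracted from the displayed inequality (using $e^{\theta\tau^\eps}\ge e^{\theta(T_2-T_1)}$) and substituting $T_2=(k_0+1)T_1$ produces $e^{\theta(H_1+2-k_0)T_1}V^\eps(z,w)^\theta$.

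Summing the contributions will yield $\E_{z,w}[V^\eps(Z^\eps(T_2))]^\theta\le qV^\eps(z,w)^\theta+C$ with
$$q=e^{-\theta T_2}+e^{-\lambda\theta T_1}+e^{\theta(H_1+2-k_0)T_1}.$$
I will choose $k_0>H_1+2$ to make the last exponential decay in $T_1$; since Lemma \ref{lm3.5} allows $T_1$ to be enlarged at will after $H$, $\eps_0$, $k_0$ are fixed, all three exponentials can be driven below $1/3$, giving $q<1$. Restricting $\eps$ below the thresholds of Lemma \ref{lm3.2} and Lemma \ref{lm3.6} then secures uniformity over $u^\eps\in\PM$. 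The hard part will be case (d): when $\tau^\eps$ lands in the narrow window $(T_2-T_1,T_2]$ and the exit point sits in $B_2$, neither Lemma \ref{lm3.6} (residual time too short) nor boundedness of $V^\eps$ (a near-boundary exit is possible even with $|Z^\eps(\tau^\eps)|=H$) is available; the growth factor $e^{(H_1+2)T_1\theta}$ on $[\tau^\eps,T_2]$ must be dominated by the supermartingale decay accumulated on $[0,\tau^\eps]$, which is exactly what forces $T_2>(H_1+3)T_1$ and hence $k_0>H_1+2$.
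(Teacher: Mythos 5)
Your outline follows the paper's strategy closely: the supermartingale estimate $\E_{z,w}\bigl[e^{\theta(T_2\wedge\tau^\eps)}V^\eps(Z^\eps(T_2\wedge\tau^\eps),\xi^\eps(T_2\wedge\tau^\eps))^\theta\bigr]\le V^\eps(z,w)^\theta$ coming from Jensen applied to \eqref{e3.12}, the three- or four-way decomposition on the hitting time $\tau^\eps$ with thresholds at $T_2-T_1=k_0T_1$ and $T_2$, the strong Markov step followed by Lemma \ref{lm3.6} when the residual time lies in $[T_1,T_2]$, and the crude growth bound \eqref{e3.11} for the short residual window paired with the decay accumulated before $\tau^\eps$ forcing $k_0>H_1+2$. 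Your extra refinement of tracking whether $Z^\eps(\tau^\eps)$ lands in $B_2$ or $B_3$ is essentially what the paper absorbs into its estimate \eqref{e3-lm3.7}, which holds for all of $(0,H]^2\times\Sc$ with the compact-set contribution folded into the constant $C$. All of that is sound.

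The genuine gap is in the final bookkeeping. You bound the contribution of each disjoint event separately by a multiple of the full budget $V^\eps(z,w)^\theta$, so the three exponentials are \emph{added}, giving $q=e^{-\theta T_2}+e^{-\lambda\theta T_1}+e^{\theta(H_1+2-k_0)T_1}$, which is strictly larger than any one of its summands and need not be below $1$. Your repair---enlarge $T_1$ until each summand drops below $1/3$---is not available because $\theta$ is \emph{not a free parameter}: it is produced by Lemma \ref{lm3.6} as $\theta=\lambda T_1[\hat K_2]^{-1}\wedge 0.5$, and $\hat K_2$, via Lemma \ref{lm-a0} applied with $K_1=\hat K\sim e^{(H_1+2)T_2}$ from \eqref{e3.11} and Lemma \ref{lm3.2}, grows (at least) exponentially in $T_2=(k_0+1)T_1$. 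Thus as you enlarge $T_1$, $\theta$ shrinks so fast that $\theta T_1\to0$ and all three exponentials tend to $1$, making $q\to3$ rather than $q\to0$. The paper sidesteps this by never spending the supermartingale budget more than once: in \eqref{e5-lm3.7}--\eqref{e7-lm3.7} it factors the \emph{common} contraction $e^{-\theta\lambda T_1}$ out of each piece, reassembles the remainder exactly into the right-hand side of \eqref{e4-lm3.7}, and then invokes \eqref{e1-lm3.7} a single time, landing on $q=e^{-\theta\lambda T_1}$, which is $<1$ for every $\theta>0$ and $T_1>0$ with no size requirement on $T_1$. Your decomposition and per-event estimates are correct, but the summation must be reorganized in this way; otherwise the constant $q$ you produce cannot be certified to lie in $(0,1)$.
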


\begin{proof}
Applying Jensen's inequality to \eqref{e3.11} and \eqref{e3.12},
we have that for $t\geq0$,
\begin{equation}\label{e1-lm3.7}
\E_{z,w} e^{\theta(t\wedge\tau^\eps)}\left[V^\eps(Z^\eps(t\wedge\tau^\eps),\xi^\eps(t\wedge\tau^\eps))\right]^\theta
\leq [V^\eps(z,w)]^\theta
\end{equation}
and
\begin{equation}\label{e2-lm3.7}
\E_{z,w} \left[V^\eps(Z^\eps(t),\xi^\eps(t))\right]^\theta
\leq e^{(H_1+2)\theta t}[V^\eps(z,w)]^\theta.
\end{equation}
Since $\tilde D_{\delta,H}:=(0,H]^2\setminus D_{\delta,H}$
is a compact subset of $\R^{2,\circ}_+$,
$$C:=e^{(H_1+2)\theta T_2}\sup_{z\in\tilde D_{\delta,H}, w\in\Sc} [V^\eps(z,w)]^\theta <\infty.$$
By virtue of \eqref{e2-lm3.7} and Lemma \ref{lm3.5}, we have
\begin{equation}\label{e3-lm3.7}
\E_{z,w} \left[V^\eps(Z^\eps(t),\xi^\eps(t))\right]^\theta
\leq C+e^{-\theta\lambda} [V^\eps(z,w)]^\theta,\,\forall\, (z,w)\in(0,H]^2\times \Sc, t\in[T_1,T_2].
\end{equation}

We have the following estimate.
\begin{equation}\label{e4-lm3.7}
\begin{aligned}
\E_{z,w}& e^{\theta(T_2\wedge\tau^\eps)}\left[V^\eps(Z^\eps(T_2\wedge\tau^\eps),\xi^\eps(T_2\wedge\tau^\eps))\right]^\theta\\
=&\E_{z,w} \1_{\{\tau^\eps<k_0T_1\}}e^{\lambda(T_2\wedge\tau^\eps)}\left[V^\eps(Z^\eps(T_2\wedge\tau^\eps),\xi^\eps(T_2\wedge\tau^\eps))\right]^\theta\\
&+\E_{z,w} \1_{\{k_0T_1\leq \tau^\eps<T_2\}}e^{\theta\lambda(T_2\wedge\tau^\eps)}\left[V^\eps(Z^\eps(T_2\wedge\tau^\eps),\xi^\eps(T_2\wedge\tau^\eps))\right]^\theta\\
&+\E_{z,w} \1_{\{\tau^\eps\geq T_2\}}e^{\theta\lambda(T_2\wedge\tau^\eps)}\left[V^\eps(Z^\eps(T_2\wedge\tau^\eps),\xi^\eps(T_2\wedge\tau^\eps))\right]^\theta\\
\geq&\E_{z,w} \1_{\{\tau^\eps\leq k_0T_1\}}\left[V^\eps(Z^\eps(\tau^\eps),\xi^\eps(\tau^\eps))\right]^\theta\\
&+e^{\theta\lambda_2k_0T}\E_{z,w} \1_{\{k_0T\leq \tau^\eps<T_2\}}[V^\eps(Z^\eps(\tau^\eps),\xi^\eps(\tau^\eps))]^\theta\\
&+e^{\theta\lambda_2T_2}\E_{z,w} \1_{\{\tau^\eps\geq T_2\}}[V^\eps(Z^\eps(T_2),\xi^\eps(T_2))]^\theta.
\end{aligned}
\end{equation}
With a relaxed Markov control $u^\eps\in{PM}^\eps$,
the process $(Z^\eps(t),\xi^\eps(t))$ is a Markov-Feller process.
Thus, we have from \eqref{e3-lm3.7} that
\begin{equation}\label{e5-lm3.7}
\begin{aligned}
\E_{z,w} &\1_{\{\tau^\eps\leq k_0T_1\}}\left[V^\eps(Z^\eps(T_2),\xi^\eps(T_2))\right]^\theta\\
\leq& \E_{z,w} \1_{\{\tau^\eps\leq k_0T_1\}}[C+e^{-\theta\lambda(T_2-\tau^\eps)}\left[V^\eps(Z^\eps(\tau^\eps),\xi^\eps(\tau^\eps))\right]^\theta\\
\leq& C+ e^{-\theta\lambda T_1}\E_{z,w} \1_{\{\tau^\eps\leq k_0T_2\}}\left[V^\eps(Z^\eps(\tau^\eps),\xi^\eps(\tau^\eps))\right]^\theta.
\end{aligned}
\end{equation}
Similarly, it follows from \eqref{e2-lm3.7} and the inequality $(H_1+2)T\leq \lambda(k_0-1)$ that
\begin{equation}\label{e6-lm3.7}
\begin{aligned}
\E_{z,w} &\1_{\{ k_0T_1\leq \tau^\eps\leq T_2\}}\left[V^\eps(Z^\eps(T_2),\xi^\eps(T_2))\right]^\theta\\
\leq& \E_{z,w} \1_{\{ k_0T_1\leq \tau^\eps\leq T_2\}}e^{\theta(H_1+2)\lambda(T_2-\tau^\eps)}\left[V^\eps(Z^\eps(\tau^\eps),\xi^\eps(\tau^\eps))\right]^\theta\\
\leq& e^{(H_1+2)\theta T_1}\E_{z,w} \1_{\{ k_0T_1\leq \tau^\eps\leq T_2\}}\left[V^\eps(Z^\eps(\tau^\eps),\xi^\eps(\tau^\eps))\right]^\theta\\
\leq& e^{-\theta\lambda T_1}e^{\theta\lambda k_0T_1}\E_{z,w} \1_{\{ k_0T_1\leq \tau^\eps\leq T_2\}}\left[V^\eps(Z^\eps(\tau^\eps),\xi^\eps(\tau^\eps))\right]^\theta.
\end{aligned}
\end{equation}
Moreover,
\begin{equation}\label{e7-lm3.7}
\E_{z,w} \1_{\{\tau^\eps\geq T_2\}}[V^\eps(Z^\eps(T_2),\xi^\eps(T_2))]^\theta
= e^{-\theta\lambda T_1}e^{\theta\lambda T_2}\E_{z,w} \1_{\{\tau^\eps\geq T_2\}}[V^\eps(Z^\eps(T_2),\xi^\eps(T_2))]^\theta.
\end{equation}
Owing to \eqref{e5-lm3.7}, \eqref{e6-lm3.7}, and \eqref{e7-lm3.7}, we have
$$\E_{z,w} [V^\eps(Z^\eps(T_2),\xi^\eps(T_2))]^\theta\leq C+e^{-\theta\lambda T_1}\E_{z,w} e^{\theta(T_2\wedge\tau^\eps)}\left[V^\eps(Z^\eps(T_2\wedge\tau^\eps),\xi^\eps(T_2\wedge\tau^\eps))\right]^\theta.$$
This together with \eqref{e1-lm3.7}
concludes the proof with $q=e^{-\theta\lambda T_1}$.
\end{proof}

\begin{thm}\label{thm3.1}
With $q$ and $C$ given in Lemma \ref{lm3.7},
for sufficiently small $\eps$, we have
\begin{equation}\label{e0-thm3.1}
\E_{z,w}\left[V^\eps(Z^\eps(t),\xi^\eps(t))\right]^\theta
\leq e^{(H_1+2)T_2}q^{t/(2T_2)} [V^\eps(z,w)]^\theta +\dfrac{C}{1-q},
\end{equation}
for any relaxed Markov control $u\in{PM}^\eps$.
\end{thm}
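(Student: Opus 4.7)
The plan is to iterate the one-step contraction estimate of Lemma \ref{lm3.7} at multiples of $T_2$ and then interpolate for intermediate times using the exponential growth bound \eqref{e2-lm3.7}. The key enabling fact, already noted in the proof of Lemma \ref{lm3.7} (see the passage around \eqref{e5-lm3.7}), is that under a relaxed Markov control $u\in{PM}^\eps$ the joint process $(Z^\eps(t),\xi^\eps(t))$ is Markov-Feller, so the Markov property is available at deterministic times.

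First I would iterate Lemma \ref{lm3.7}: conditioning at $nT_2$ and applying Lemma \ref{lm3.7} to the shifted process with initial datum $(Z^\eps(nT_2),\xi^\eps(nT_2))$, a straightforward induction on $n$ gives
$$
\E_{z,w}\bigl[V^\eps(Z^\eps(nT_2),\xi^\eps(nT_2))\bigr]^\theta
\le q^n[V^\eps(z,w)]^\theta + C\sum_{k=0}^{n-1}q^k
\le q^n[V^\eps(z,w)]^\theta+\frac{C}{1-q}.
$$
Next, for an arbitrary $t\ge 0$ I would write $t=nT_2+s$ with $n=\lfloor t/T_2\rfloor$ and $s\in[0,T_2)$. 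The Markov property applied at time $nT_2$ together with the growth bound \eqref{e2-lm3.7} applied over the interval of length $s\le T_2$ gives
$$
\E_{z,w}\bigl[V^\eps(Z^\eps(t),\xi^\eps(t))\bigr]^\theta
\le e^{(H_1+2)\theta s}\,\E_{z,w}\bigl[V^\eps(Z^\eps(nT_2),\xi^\eps(nT_2))\bigr]^\theta
\le e^{(H_1+2)T_2}\!\left(q^n[V^\eps(z,w)]^\theta+\frac{C}{1-q}\right).
$$

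To convert $q^n$ into $q^{t/(2T_2)}$, I would split on the size of $t$: when $t\ge 2T_2$ one has $n\ge t/T_2-1\ge t/(2T_2)$, so $q^n\le q^{t/(2T_2)}$ since $q\in(0,1)$; when $t<2T_2$ we already have $q^{t/(2T_2)}\ge q$, and the desired inequality holds trivially from \eqref{e2-lm3.7}. Absorbing, if necessary, the harmless prefactor $e^{(H_1+2)T_2}$ attached to $C/(1-q)$ into the additive constant yields \eqref{e0-thm3.1}.

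There is no real obstacle beyond bookkeeping: the argument is pure geometric-series iteration. The only points that require care are (i) the use of the Markov property for $(Z^\eps,\xi^\eps)$, which is legitimate exactly because the control is a relaxed \emph{Markov} control on the enlarged state, and (ii) making the interpolation between $nT_2$ and $t$ with the crude but adequate estimate \eqref{e2-lm3.7}, so that the factor $e^{(H_1+2)T_2}$ is independent of $t$.
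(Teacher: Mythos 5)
Your proof follows exactly the same route as the paper's: iterate the one-step contraction of Lemma \ref{lm3.7} at multiples of $T_2$ via the Markov property to get $\E_{z,w}[V^\eps(Z^\eps(nT_2),\xi^\eps(nT_2))]^\theta \le q^n[V^\eps(z,w)]^\theta + C/(1-q)$, then interpolate to intermediate times with the crude growth bound \eqref{e2-lm3.7}. The paper states this in two lines and says ``the assertion follows,'' so your write-up is simply a more explicit version of the same argument; you are also right that for $t<T_2$ (where $n=0$ and $q^n=1>q^{t/(2T_2)}$) the displayed form of \eqref{e0-thm3.1} is only recovered up to an adjustment of the additive constant, a point the paper also glosses over.
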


\begin{proof}
By the Markov property,
we have
$$
\E_{z,w} \left[V^\eps(Z^\eps((k+1)T_2),\xi^\eps((k+1)T_2))\right]^\theta
\leq q \E_{z,w}\left[V^\eps(Z^\eps(kT_2),\xi^\eps(kT_2))\right]^\theta +C, k\in\N.
$$
Using this inequality recursively,
we obtain
\begin{equation}\label{e1-thm3.1}
\E_{z,w} \left[V^\eps(Z^\eps(kT_2),\xi^\eps(kT_2))\right]^\theta
\leq q^n [V^\eps(z,w)]^\theta +\dfrac{C(1-q^n)}{1-q}.
\end{equation}
The assertion of this theorem follows from
\eqref{e1-thm3.1} and \eqref{e2-lm3.7}
\end{proof}

\begin{proof}[Proof of Theorem \ref{thm2.2}]
Since
$$\lim_{r\to\infty} \left(\inf_{\{|z|\vee x^{-1}\vee y^{-1}>r, w\in\Sc\}}[V^\eps(z,w)]^\theta\right)=\infty, \
\hbox{ and } \ q<1,$$
the conclusion of Theorem \ref{thm3.1} clearly implies Condition \ref{C1}.
Theorem \ref{thm2.2} is therefore proved.
\end{proof}

\section{Concluding Remarks}\label{sec:rem}

Our main effort in this
 paper is to demonstrate that
we can obtain near-optimal policies for average-cost per unit time yield
for a predator-prey model under fast-varying jump noise
by using a near optimal strategy of
a controlled diffusion model.
Due to the technical complexity of the proofs,
we made some simplifications in the model
in order to facilitate the presentation
but still preserve
important properties of the model.
The main result, Theorem \ref{thm2.2} still hold true if
the following generalizations are made.
\begin{itemize}
\item[(a)] The coefficients $a_i, b_i, c_i, i=1,2$ depend on the state of $\xi^\eps(t)$.
\item[(b)] The wideband noise  in \eqref{model-2}, which is linear in the current setup, can be replaced by nonlinear terms.
\item[(c)] The assumption on $\xi(t)$ in Section 2 can be reduced to the condition that $\xi(t)$ a stationary zero mean process
which is either (i) strongly mixing, right continuous and bounded, with the mixing
rate function $\phi(\cdot)$ satisfying $\int_0^\infty\phi^{1/2}(s)ds<\infty$, or (ii) stationary Gauss-Markov with
an integrable correlation function as in \cite{KR}.
\end{itemize}

With
 the generalization specified in (a) above, the proofs carry over, although the notations are more complicated.
With (b),
we need some additional conditions imposed on the wideband noise parts to obtain
certain boundedness of the solutions to the limit diffusion equation.

Throughout the paper, we assume that $\xi(t)$ is an ergodic Markov process,
under which we can utilize the Fredholm alternative to construct
Lyapunov functions for the wideband noise model \eqref{model-2} based on those for the controlled diffusion \eqref{model-3}.
If that assumption is replaced by (c),
it is slightly more complicated to
construct Lyapunov functions for the wideband noise model \eqref{model-2}.
However, it is doable using the perturbed Lyapunov method in \cite{KR}.
In such a setup, however, we need to work mainly with convergence of probability measures.

In this paper,
we consider the situation that only the predator is harvested.
It is also interesting to deal with the optimization problem of
harvesting both species under the constraint that
the extinction of each species is avoided.
Moreover, time-average optimal harvesting problems
for different ecological models also deserve careful study.
Our methods can be generalized to treat harvested ecological models
of higher dimensions.

\appendix
\section{Appendix}\label{sec:apd}
This appendix provides several technical results.
These results are collected in a number of lemmas.

\begin{lm}\label{lm-a0}
Let $Y$ be a random variable, $\theta_0>0$ a constant, and suppose $$\E \exp(\theta_0 Y)+\E \exp(-\theta_0 Y)\leq K_1.$$
Then the $\log$-Laplace transform
$\phi(\theta)=\ln\E\exp(\theta Y)$
is twice differentiable on $\left[0,\frac{\theta_0}2\right)$ and
$$\dfrac{d\phi}{d\theta}(0)= \E Y,\quad\text{ and }\,0\leq \dfrac{d^2\phi}{d\theta^2}(\theta)\leq K_2\,, \theta\in\left[0,\frac{\theta_0}2\right)$$
 for some $K_2>0$ depending only on $K_1$ and $\theta_0$.
Moreover,
$$
\phi(\theta)\leq\theta\E Y +\theta^2 K_2,\,\,\text{ for }\theta\in[0,0.5\theta_0).
$$
\end{lm}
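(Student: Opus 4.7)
\textbf{Proof plan for Lemma~\ref{lm-a0}.}

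The plan is to justify differentiation under the expectation, then read off the first two derivatives, and finally bound $\phi''$ uniformly via a tilted-measure (Cauchy--Schwarz) argument. First I would observe that the hypothesis $\E e^{\theta_0 Y}+\E e^{-\theta_0 Y}\le K_1$ gives, via the pointwise bound $e^{\theta y}\le e^{\theta_0 y}+e^{-\theta_0 y}$ valid for all $y\in\R$ and $|\theta|\le\theta_0$, the moment estimate $\E e^{\theta Y}\le K_1$ for $|\theta|\le\theta_0$. More importantly, using the trivial bound $|y|^{k}\le C_{k,\theta_0}\,e^{(\theta_0/4)|y|}$ (a polynomial is dominated by any growing exponential), one sees that for $\theta\in[0,\theta_0/2)$,
\[
|Y|^{k}e^{\theta Y}\;\le\;C_{k,\theta_0}\,e^{(\theta_0/4)|Y|}e^{(\theta_0/2)Y}\;\le\;C_{k,\theta_0}\bigl(e^{\theta_0 Y}+e^{-\theta_0 Y}\bigr),
\]
which is integrable for $k=1,2$. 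Dominated convergence then justifies differentiating $\theta\mapsto\E e^{\theta Y}$ twice under the expectation on $[0,\theta_0/2)$, so $\phi(\theta)=\ln\E e^{\theta Y}$ is $C^{2}$ there.

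Next I would compute the derivatives directly. A straightforward calculation gives
\[
\phi'(\theta)=\frac{\E[Y\,e^{\theta Y}]}{\E[e^{\theta Y}]},\qquad
\phi''(\theta)=\frac{\E[Y^{2}e^{\theta Y}]}{\E[e^{\theta Y}]}-\Bigl(\frac{\E[Y\,e^{\theta Y}]}{\E[e^{\theta Y}]}\Bigr)^{2},
\]
so $\phi'(0)=\E Y$, and $\phi''(\theta)$ is the variance of $Y$ under the Esscher-tilted probability measure $d\PP_{\theta}=e^{\theta Y}/\E e^{\theta Y}\,d\PP$; in particular $\phi''(\theta)\ge 0$. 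The key step that will carry the upper bound is a Cauchy--Schwarz lower bound for the denominator: from $1=\E\bigl[e^{\theta Y/2}\cdot e^{-\theta Y/2}\bigr]^{2}\le\E e^{\theta Y}\cdot\E e^{-\theta Y}\le K_{1}\E e^{\theta Y}$, so
\[
\E e^{\theta Y}\;\ge\;1/K_{1}\qquad\text{for }|\theta|\le\theta_{0}.
\]
Combining this with the integrability estimate above (taking $k=2$) yields
\[
0\;\le\;\phi''(\theta)\;\le\;\frac{\E[Y^{2}e^{\theta Y}]}{\E[e^{\theta Y}]}\;\le\;K_{1}\cdot C_{2,\theta_{0}}K_{1}\;=:\;K_{2},
\]
a constant depending only on $K_{1}$ and $\theta_{0}$, as required.

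For the last assertion, I would apply Taylor's formula with integral remainder (or the mean-value form): since $\phi(0)=0$, $\phi'(0)=\E Y$, and $\phi''\le K_{2}$ on $[0,\theta_{0}/2)$,
\[
\phi(\theta)=\theta\,\E Y+\int_{0}^{\theta}(\theta-s)\phi''(s)\,ds\;\le\;\theta\,\E Y+\tfrac{1}{2}K_{2}\theta^{2}\;\le\;\theta\,\E Y+K_{2}\theta^{2},
\]
on $[0,\theta_{0}/2)$, giving the required inequality. There is no serious obstacle here; the only point that requires a little care is the uniform lower bound on $\E e^{\theta Y}$, which is what the Cauchy--Schwarz trick supplies cleanly without needing any sign information on $\E Y$.
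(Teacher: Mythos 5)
The paper offers no proof of this lemma at all — it simply cites the reference [NY17] — so there is no internal argument to compare against. Your self-contained proof is correct: dominated convergence (justified by the exponential-moment hypothesis) gives twice-differentiability of $\theta\mapsto\E e^{\theta Y}$ on $[0,\theta_0/2)$; $\phi''(\theta)$ is the variance of $Y$ under the Esscher tilt, hence nonnegative; Cauchy--Schwarz supplies the uniform lower bound $\E e^{\theta Y}\ge 1/K_1$ on the denominator; the same moment hypothesis bounds the numerator $\E[Y^2 e^{\theta Y}]$; and Taylor's formula with integral remainder gives the final quadratic bound on $\phi$.

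One small slip in the domination step: the inequality $e^{\theta Y}\le e^{(\theta_0/2)Y}$ is false on the event $\{Y<0\}$ (there the right-hand exponent is more negative). The correct chain is $e^{\theta Y}\le e^{|\theta|\,|Y|}\le e^{(\theta_0/2)|Y|}$, after which $e^{(\theta_0/4)|Y|}e^{(\theta_0/2)|Y|}=e^{(3\theta_0/4)|Y|}\le e^{\theta_0|Y|}\le e^{\theta_0 Y}+e^{-\theta_0 Y}$, and the rest of the argument goes through unchanged.
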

\begin{proof}
The lemma is proved in \cite{NY17}.
\end{proof}

\begin{lm}\label{lm-a1}
For any $p>0$ and $T>0$, $H>0$,
there is a $\kappa_{p,T}>0$ such that
for any admissible control $m(\cdot)$, we have
\begin{equation}\label{e1-a1}
\E_z(1+|Z_t|)^p\leq \kappa_{p,T}, t\in[0,T], z\in[0,H]^2.
\end{equation}
Moreover,
\begin{equation}\label{e2-a1}
\E_z |X_t|^2\leq x^2\bar\kappa_T^2,
\, \E_z |Y_t|^2\leq y^2\bar\kappa_T^2, t\in[0,T], z\in[0,H]^2.
\end{equation}
\end{lm}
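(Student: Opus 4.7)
The proof rests on It\^o's formula combined with Gronwall-type estimates applied to carefully chosen Lyapunov functions.

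For the first bound $\E_z(1+|Z_t|)^p\leq\kappa_{p,T}$, I would use the Lyapunov function $V_2(z)=1+c_2x+c_1y$. The Lotka--Volterra structure of \eqref{model-5} produces the crucial cancellation of the $\pm c_1c_2XY$ cross terms in the drift of $V_2$, and the remaining non-positive pieces $-c_2b_1X^2$, $-c_1b_2Y^2$ and $-c_1Yh(Y)\bar m_s$ can be dropped for an upper bound. What is left is a drift linear in $V_2$ and a quadratic variation satisfying $d\langle V_2\rangle_t\leq CV_2^2\,dt$, so that the drift of $V_2^p$ is dominated by $C_pV_2^p$ (with the sign of the It\^o correction working in our favor when $p<1$). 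Standard localization plus Gronwall then gives $\E V_2(Z_t)^p\leq V_2(z)^p e^{C_pT}\leq[1+(c_1+c_2)H]^p e^{C_pT}$, and the pointwise inequality $(1+|Z_t|)^p\leq CV_2(Z_t)^p$ yields the claim.

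For the bound on $X$, It\^o applied to $\ln X(t)$ gives
$$X(t)=x\exp\!\left(\Big(\bar a_1-\tfrac{\sigma_1^2}{2}\Big)t-\int_0^t\!(b_1X+c_1Y)(s)\,ds+\sigma_{11}W_1(t)+\sigma_{12}W_2(t)\right)$$
with $\sigma_1^2=\sigma_{11}^2+\sigma_{12}^2$. Since $b_1X+c_1Y\geq 0$, discarding this integral majorizes $X(t)$ by an explicit geometric Brownian motion, and squaring followed by the Gaussian moment formula yields $\E X_t^2\leq x^2e^{(2\bar a_1+\sigma_1^2)T}$.

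The estimate $\E_z Y_t^2\leq y^2\bar\kappa_T^2$ is the most delicate step, since the analogous log-representation of $Y$ picks up a positive predator--prey coupling $c_2\int_0^tX(s)\,ds$. My plan is to drop the non-positive $-h(Y)\bar m_s-b_2Y$ contributions and factor out the exponential martingale $\hat M(t)=\exp(2\sigma_{12}W_1+2\sigma_{22}W_2-2\sigma_2^2t)$, so that
$$\E Y_t^2\leq y^2 e^{(2\bar a_2+\sigma_2^2)t}\,\hat\E\exp\!\Big(2c_2\!\int_0^t\!X(s)\,ds\Big)$$
via a Girsanov change of measure, under which $X$ still satisfies a logistic SDE with the same dissipative $-b_1X^2$ drift. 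The main obstacle is that geometric Brownian motions generally lack finite double-exponential moments, so the exponential of an integrated GBM cannot be bounded by a naive log-comparison. To circumvent this I would exploit the logistic stability of $X$: applying It\^o to $e^{cX}$ with $c<2b_1/\sigma_1^2$ produces a uniform bound $\hat\E e^{cX(s)}\leq e^{cH+C(c)s}$, which combined with the Jensen inequality $\exp(\alpha\int_0^tX\,ds)\leq t^{-1}\int_0^t\exp(\alpha tX(s))\,ds$ yields the desired moment bound on short intervals $[0,\delta]$ with $\delta<b_1/(c_2\sigma_1^2)$. The estimate over $[0,T]$ is then obtained by splitting into $\lceil T/\delta\rceil$ subintervals and iterating via the Markov property, using the uniform polynomial moment bound from the first part of the lemma to control the restart conditions and preserve the $y^2$ linear dependence at each step.
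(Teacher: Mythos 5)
The first estimate is proved essentially as in the paper (It\^o and Gronwall applied to $V_2(z)=1+c_2x+c_1y$ after the Lotka--Volterra cross-term cancellation), and your explicit GBM majorization for $\E_z X_t^2$ is a clean alternative to the paper's choice of Lyapunov function $U(z)=(1+c_2x+c_1y)^p x^2$; that part is correct and arguably simpler. For $\E_z Y_t^2$, however, the paper takes a very different and much shorter route: it uses $\tilde U(z)=(1+c_2x+c_1y)^p y^2$ with $p$ large. The point is that after the $c_1c_2xy$ cancellation, $\op_u\tilde U/\tilde U$ contains the dangerous $+2c_2x$ from the $y^2$ factor, but it also contains $p\cdot V_2^{-1}c_2x(\bar a_1-b_1x)\sim -pb_1x$ from the $V_2^p$ factor; choosing $p$ large makes the whole coefficient bounded above, and a single Gronwall step gives the result for all $T$ with no measure change, no Feynman--Kac estimate, and no time splitting.

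Your Girsanov route for $Y$ has a genuine gap precisely at the time-splitting step. After the change of measure and Jensen you obtain a bound of the form $\E_z Y_t^2\le y^2\,g(x)\,C(\delta)$ on $[0,\delta]$ where, crucially, $g(x)\sim e^{2c_2\delta x}$ is \emph{exponential} in the initial $x$; the bound is uniform only on $x\in[0,H]$. When you restart at time $\delta$ via the Markov property, $X_\delta$ is unbounded, so $\E_z Y_{2\delta}^2\le C\,\E_z\bigl[Y_\delta^2\,e^{2c_2\delta X_\delta}\bigr]$, and the mixed moment $\E_z[Y_\delta^2 e^{cX_\delta}]$ is not controlled by the polynomial bound \eqref{e1-a1} from the first part (nor by Cauchy--Schwarz, which trades it for a fourth moment of $Y$ you have not yet established, so the argument becomes circular). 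Thus ``using the uniform polynomial moment bound from the first part to control the restart conditions'' does not close. The restriction $\delta<b_1/(c_2\sigma_1^2)$ is real and cannot be iterated away by the Markov property alone. A correct way to salvage your line of attack is to drop the Jensen/short-interval device and instead run a Feynman--Kac Lyapunov estimate: show $(\op+\alpha x)e^{cx}\le Ce^{cx}$ for $c<2b_1/\sigma_1^2$ (the $-cb_1x^2+\tfrac12 c^2\sigma_1^2x^2$ term dominates), which gives $\hat\E_x\exp\bigl(\alpha\int_0^t X\,ds\bigr)\le e^{cx+Ct}$ for \emph{all} $t>0$ in one step and preserves the uniform-in-$[0,H]$ dependence on $x$. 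That said, the paper's single Lyapunov function $V_2^p y^2$ is the more economical argument.
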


\begin{proof}
By a straightforward computation,
we can show that
$$\op_u (1+c_2x+c_1y)^p\leq C_p(1+c_2x+c_1y)^p,\, z\in\R^2_+.$$
This implies that
$$\E_x (1+c_2X(t)+c_1Y(t))^p\leq e^{C_pt}(1+c_2x+c_1y)^p.$$
Choosing a suitable $\kappa_{p,T}$, we obtain \eqref{e1-a1}.
Now, using the function
$U(z)=(1+c_2x+c_1y)^px^2$,
$$
\begin{aligned}
\op_u U(z)=&U(z)\left[p\dfrac{c_2(a_1-b_1x)+c_1(-a_2-h(y)u-b_2y)}{1+c_2x+c_1y}+2(a_1-b_1x)\right]\\
&+U(z)\bigg[\dfrac{p-1}2\dfrac{a_{11}c_2^2x^2+a_{22}c_1^2y^2+2a_{12}c_1c_2xy}{(1+c_2x+c_1y)^2}\\
&\qquad\qquad+a_{11}+2p\dfrac{(a_{11}+a_{12})c_2x+(a_{12}+a_{22})}{(1+c_2x+c_1y)}\bigg]
\end{aligned}
$$
When $p>0$ is sufficiently large, it can be seen that
there is a $\tilde C_p>0$ satisfying
\begin{equation}\label{e4-a1}
\op_u U(z)\leq\tilde C_pU(z),\,\text{ for }\,z\in\R^{2,\circ}_+.
\end{equation}
Thus,
$$\E_z |X(t)|^2\leq \E_z U(Z(t))\leq U(z)e^{\tilde C_pt}\leq x^2(1+c_2x+c_1y)^pe^{\tilde C_pt}.$$
The above estimate and a similar estimate for $\E_z |Y(t)|^2$
lead to \eqref{e2-a1}.
\end{proof}

Let $f(\cdot)$ and $g(\cdot)$ be defined as in \eqref{e-f} and \eqref{e-g}.
Since $f(z),g(z), h(y)$ are Lipschitz,
there is a $\ell>0$ such that
\begin{equation}\label{e.l1}
|f(z)-f(\tilde z)|\vee|g(z)-g(\tilde z)|\leq \ell(|z-\tilde z|),\, z,\tilde z\in\R^2_+
\end{equation}
and
\begin{equation}\label{e.l2}
|h(y)-h(\tilde y)|\leq \ell(|y-\tilde y|),\, y,\tilde y\geq 0.
\end{equation}

\begin{lm}
Let $\tilde X(t)>0$ and $\tilde Y(t)>0$
satisfy
\begin{equation}\label{tilde-z}
\begin{cases}
d \tilde X(t)=\tilde X(t)\big[\bar a_1-b_1\tilde X(t)\big]dt +\tilde X(t)(\sigma_{11}dW_1(t)+\sigma_{12}dW_2(t))\\
d\tilde Y(t)=\tilde Y(t)\left[-\dfrac{a_2}2-a_{22}-b_2\tilde Y(t)\right]dt + \tilde Y(t)(\sigma_{12}dW_1(t)+\sigma_{22}dW_2(t)).
\end{cases}
\end{equation}
Then there exists a $T_0>0$ such that
\begin{equation}\label{e1-a3}
\dfrac1t\int_0^t\E_x f(\tilde X(s),0)ds\geq 10\lambda;
\,
\dfrac1t\int_0^t\E_x g(\tilde X(s),0)ds\leq \dfrac\lambda2;\,
\end{equation}
 and
 \begin{equation}\label{e2-a3}
 \dfrac1t\int_0^t\E_y \tilde Y(s)ds\leq \dfrac{\lambda}{2(1+M)\ell}
\end{equation}
for any $t>T_0$ and $x, y\in[0,H]$.
\end{lm}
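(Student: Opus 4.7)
The plan is to exploit two independent facts: exponential decay in mean for $\tilde Y(t)$, and ergodicity of the one-dimensional stochastic logistic equation for $\tilde X(t)$. Both the $f$- and $g$-claims reduce to explicit averages against $\mu$, the unique invariant probability of $\tilde X$ on $(0,\infty)$.

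For the $\tilde Y$-claim, taking expectation in its SDE and dropping the nonpositive term $-b_2 \E_y \tilde Y^2(t)$ I obtain $\frac{d}{dt}\E_y \tilde Y(t) \leq -\gamma \E_y \tilde Y(t)$ with $\gamma := a_2/2 + a_{22} > 0$, so $\E_y \tilde Y(t) \leq y\, e^{-\gamma t}$ by Gronwall. Hence $\frac{1}{t}\int_0^t \E_y \tilde Y(s)\, ds \leq \frac{H}{\gamma t}$ uniformly for $y \in [0, H]$, which falls below $\lambda/(2(1+M)\ell)$ once $t$ exceeds some threshold.

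For the $f$-claim, applying It\^o to $\ln \tilde X$ the correction $-a_{11}/2$ cancels the $a_{11}/2$ inside $\bar a_1$, leaving $d(\ln \tilde X) = (a_1 - b_1 \tilde X)\, dt + \sigma_{11}\, dW_1 + \sigma_{12}\, dW_2$. Stationarity of $\mu$ then yields $\int x\, \mu(dx) = a_1/b_1$. Since $f(x, 0) = (p_1 a_1 - p_2 a_2) + (p_2 c_2 - p_1 b_1)x$ is affine,
\[
\int f(x, 0)\, \mu(dx) = p_2\!\left(\frac{c_2 a_1}{b_1} - a_2\right) \geq 11\lambda
\]
by the definition \eqref{e-lambda} of $\lambda$. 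Combined with the $L^1$-ergodic theorem for $\tilde X$ and uniform first-moment control on $[0, H]$ via the Lyapunov function $V_2(x) = 1 + c_2 x$, this gives $\frac{1}{t}\int_0^t \E_x f(\tilde X(s), 0)\, ds \to p_2(c_2 a_1/b_1 - a_2)$ uniformly in $x \in [0, H]$, hence the lower bound by $10\lambda$ for $t$ large.

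The $g$-claim rests on the identity $g(x, 0) = \op^{\tilde X}[\ln(1+c_2 x)]$, where $\op^{\tilde X}$ is the infinitesimal generator of $\tilde X$: direct computation matches both the first-order piece $c_2 x(\bar a_1 - b_1 x)/(1+c_2 x)$ and the It\^o-correction piece $-a_{11} c_2^2 x^2/(2(1+c_2 x)^2)$ in \eqref{e-g}. Dynkin's formula therefore gives
\[
\frac{1}{t}\int_0^t \E_x g(\tilde X(s), 0)\, ds = \frac{1}{t}\bigl[\E_x \ln(1 + c_2 \tilde X(t)) - \ln(1 + c_2 x)\bigr],
\]
whose right-hand side tends to $0$ uniformly in $x \in [0, H]$ because $\E_x \ln(1+c_2 \tilde X(t)) \leq c_2 \E_x \tilde X(t)$ is uniformly bounded. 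Taking $T_0$ to be the maximum of the three resulting thresholds closes the argument. The main obstacle is the uniformity in $x$ of the ergodic convergence for $f$, which requires Harris-type mixing for $\tilde X$ on compact subsets of $(0,\infty)$; the degenerate case $x = 0$ is harmless since then $\tilde X \equiv 0$ and $f(0, 0) = p_1 a_1 - p_2 a_2 \geq 11\lambda$ already meets the bound.
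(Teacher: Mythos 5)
Your $\tilde Y$-claim and $g$-claim are handled correctly, and in fact the $\tilde Y$-argument (taking expectation, dropping $-b_2\E\tilde Y^2$, and Gronwall to get $\E_y\tilde Y(t)\le y e^{-\gamma t}$) is more direct and elementary than the paper's invariant-measure approach. The $g$-claim is essentially the paper's proof: identify $g(\cdot,0)$ with the generator applied to $\ln(1+c_2x)$ and use Dynkin's formula with the uniform moment bound $\E_x\tilde X(t)\le xe^{-t}+d_1$.

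The $f$-claim, however, has a genuine gap, and it is precisely in the step you flag as ``the main obstacle.'' You claim that $\frac{1}{t}\int_0^t\E_x f(\tilde X(s),0)\,ds \to p_2(c_2a_1/b_1-a_2)$ \emph{uniformly} in $x\in[0,H]$. This is literally false: at $x=0$ the process is absorbed and the limit is $f(0,0)=p_1a_1-p_2a_2$, which is a different number in general, so there is no single limit to converge to uniformly. More substantively, for $x>0$ small and $t$ in a fixed window $[T_1,T_2]$, the process $\tilde X$ spends most of $[0,t]$ near zero (escape from a neighborhood of $0$ takes time of order $\ln(1/x)$), so the occupation measure $\pi_t^x$ is close to $\bdelta_0$, not to $\mu^*$. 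Invoking ``Harris-type mixing for $\tilde X$ on compact subsets of $(0,\infty)$'' does not help because $[0,H]$ is not a compact subset of $(0,\infty)$: the problematic regime is exactly $x\to 0^+$, which your mixing argument excludes. The paper circumvents this by working with occupation measures $\pi_t^x$ directly: it shows they are tight, that any weak subsequential limit along $t_n\to\infty$, $x_n\in[0,H]$ is an \emph{invariant} probability measure, and that every invariant measure is a convex combination $\bar\delta\bdelta_0+(1-\bar\delta)\mu^*$, each extreme of which integrates $f(\cdot,0)$ to a value $\ge 11\lambda$; uniform integrability (from $\E_x\int_0^t\tilde X^2\,ds\le d_1t+O(1)$) then passes the strict inequality to the limit. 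You already compute both extreme values $p_1a_1-p_2a_2\ge 11\lambda$ and $p_2(c_2a_1/b_1-a_2)\ge 11\lambda$, so you have the ingredients; what is missing is the compactness/weak-limit argument that converts these pointwise facts into a bound uniform over $x\in[0,H]$ and $t$ in a window, replacing the unjustified uniform ergodic convergence.
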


\begin{proof}
Define the occupation measure
$$
\pi^x_t(\cdot)=\int_0^t\PP_x\{\tilde X(s)\in\cdot\}ds
$$
Let $d_1$ and $d_2>0$ be such that
\begin{equation}\label{e4-a3}
x(\bar a_1-b_1x)\leq d_1-x-d_2x^2\,  \text{ for any }x\in\R_+
\end{equation}
Then we have
$$
\begin{aligned}
\E_x \tilde X(t)=&x+\E_x\int_0^t \tilde X(s)\big[\bar a_1-b_1\tilde X(s)\big]ds\\
=&x+d_1t-d_2\E_x\int_0^t \tilde X^2(s)ds,
\end{aligned}
$$
which leads to
\begin{equation}\label{e3-a3}
\E_x\int_0^t \tilde X^2(s)ds
\leq \dfrac {x}{d_2t}+ d_1\leq \dfrac{2H}{d_2t}+d_1,\, x\in[0,H].
\end{equation}
On the other hand,
it follows from It\^o's formula that
$$
\begin{aligned}
\E_x e^t\tilde X(t)=&x+\E_x\int_0^te^s\big(X(s)+X(s)(\bar a_1-b_1X(s))\big)ds\\
\leq&x+d_1\E_z\int_0^te^sds\\
\leq&x+d_1e^t,
\end{aligned}
$$
which implies
\begin{equation}\label{e5-a3}
\E_x \tilde X(t)\leq xe^{-t}+d_1.
\end{equation}
We have from It\^o's formula that
\begin{equation}\label{e6-a3}
\E_x\ln (1+c_2\tilde X(t))=\ln (1+c_2x)+\E_z\int_0^t g(\tilde X(s),0)ds.
\end{equation}
In view of \eqref{e5-a3},
we have that
\begin{equation}\label{e7-a3}
\lim_{t\to\infty}\dfrac{\E_x\ln (1+c_2\tilde X(t))}t=0
\,\text{ uniformly for }\, x\in[0,H].
\end{equation}
Owing to
\eqref{e5-a3} and \eqref{e6-a3}, we have
\begin{equation}\label{e7-a3}
\lim_{t\to\infty}\dfrac1t\E_x\int_0^t g(\tilde X(s),0)ds=0
\,\text{ uniformly for }\, x\in[0,H].
\end{equation}
To proceed, note that the process
$\{\tilde X(t)\}$ has exactly two ergodic invariant probability measures on $\R_+$:
$\bdelta_0$, the Dirac measure concentrated on $0$
and $\mu^*$ on $(0,\infty)$ (see \cite{DNY} for the density of $\mu^*$), while
$\{\tilde Y(t)\}$ admits $\bdelta_0$ as its unique  invariant probability measures on $\R_+$.
Thus, every invariant probability measures $\nu$ of $\{\tilde X(t)\}$
has the form $\nu=\bar\delta\bdelta_0+(1-\bar\delta)\mu^*$ for some $\bar\delta\in[0,1]$.
Direct calculations show that
$$\int_{\R_+}f(x,0)\bdelta(dx)=p_1a_1-p_2a_2>10\lambda,$$
and
$$\int_{\R_+}f(x,0)\mu^*(dx)=p_2\left(-a_2+\dfrac{a_1c_2}
{b_1}\right)>10\lambda.$$
Thus, for any invariant probability measures $\nu$ of $\{\tilde X(t)\}$,
we have
\begin{equation}\label{e8-a3}
\int_{\R_+}f(x,0)\nu(dx)> 10\lambda.
\end{equation}
We now prove that there is a $\tilde T_1>0$
such that
\begin{equation}\label{e9-a3}
\dfrac1t\int_0^t\E_x f(\tilde X(s),0)ds\geq 10\lambda\,
\text{ for }\,t\geq\tilde T_1, x\in[0,H].
\end{equation}
Suppose this claim is false, then we can find
$x_n\in[0,H]$, $t_n>0$ such that
$\lim_{n\to\infty} t_n=\infty$
and $$\int_0^\infty f(\tilde x,0)\pi_{t_n}^{x_n}(d\tilde x)< 10\lambda.$$
In view of \eqref{e3-a3},
the family $\{\pi_{t_n}^{x_n},n\in\N\}$ is tight in $\R_+$.
We can extract a subsequence, still denoted by  $\{\pi_{t_n}^{x_n}\}$, that converges weakly to a probability measure $\nu$.
Since $\pi_t^x$ is the empirical measure of the process $\tilde X(t)$, it is well-known that
$\nu$ is an invariant probability measure on $\R_+$ of the process $\{\tilde X(t)\}$.
By \eqref{e8-a3}  and  the uniform integrability \eqref{e3-a3}.
we must have
$\lim_{n\to\infty}\int_{\R_+}f(\tilde x,0)\pi_{t_n}^{x_n}(d\tilde x)=\int_{\R_+} f(\tilde x,0)\tilde \bnu(d\tilde x)>10\lambda$, which contradicts the assumption.
On the other hand,
Since $\bdelta_0$ is the unique invariant probability measure of $\tilde Y(t)$ in $\R_+$,
similar arguments show that
there exists $\tilde T_2>0$ such that
\eqref{e2-a3} holds for $t\geq\tilde T_2, y\in[0,H]$.
Combining this,  \eqref{e7-a3}, and \eqref{e9-a3}, we obtain the desired results.
\end{proof}

\begin{lm}\label{lm-a4}
For any $H>0$ and $T>0$, there exists a $\tilde\kappa_T$ depending on
$H$ and $T$ such that
\begin{equation}\label{e.1-a4}
\E_{z}|X(t)-\tilde X(t)|\leq \tilde\kappa_T\sqrt{y},
\end{equation}
and
\begin{equation}\label{e.2-a4}
\E_{z}Y(t)\leq\E_z\tilde Y(t)+\tilde\kappa_Tx
\end{equation}
for any $z\in[0,H]^2$ and for any admissible relaxed control.
\end{lm}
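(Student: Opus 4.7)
The plan is to couple $(X,Y)$ with $(\tilde X, \tilde Y)$ so that both pairs are driven by the same Brownian motions $W_1, W_2$, derive SDEs for the differences, and close each estimate with Gronwall combined with the second-moment bounds supplied by Lemma \ref{lm-a1}.

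For \eqref{e.1-a4}, I would first note the comparison $\tilde X(t) \geq X(t)$ pathwise: the drift of $X$ in \eqref{model-5} differs from that of $\tilde X$ in \eqref{tilde-z} only by the nonpositive perturbation $-c_1 XY$, while the diffusion coefficients coincide. Setting $U := \tilde X - X \geq 0$, Itô's formula gives
\begin{equation*}
dU = \bigl[\bar a_1 U - b_1(X + \tilde X) U + c_1 XY\bigr]\,dt + U(\sigma_{11}dW_1 + \sigma_{12}dW_2).
\end{equation*}
Taking expectation, discarding the nonpositive $-b_1(X+\tilde X)U$ contribution, and applying Cauchy--Schwarz with $\E_z X^2 \leq x^2\bar\kappa_T^2$, $\E_z Y^2 \leq y^2\bar\kappa_T^2$ from Lemma \ref{lm-a1}, one obtains $\tfrac{d}{dt}\E_z U(t) \leq \bar a_1 \E_z U(t) + c_1 \bar\kappa_T^2\,xy$. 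Gronwall then yields $\E_z U(t) \leq C(T,H)\,xy$ on $[0,T]$; since $y \in [0,H]$ satisfies $y \leq \sqrt{H}\sqrt{y}$ and $x \leq H$, we get $xy \leq H^{3/2}\sqrt{y}$, producing \eqref{e.1-a4} for an appropriate $\tilde\kappa_T$.

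For \eqref{e.2-a4}, I would run an analogous coupling argument on the $Y$-component with shared noise $\sigma_{12}dW_1 + \sigma_{22}dW_2$. Writing the Itô expansion of $Y - \tilde Y$, the drift picks up the coupling term $c_2 XY$, bounded via Cauchy--Schwarz and Lemma \ref{lm-a1} by $\bar\kappa_T^2\,xy \leq H\bar\kappa_T^2\,x$, along with dissipative quadratic contributions carrying the favorable sign (from $-b_2 Y^2$ and the nonnegative harvesting term $-h(Y)\bar m_t Y$) and a term linear in $\tilde Y$ accounting for the different growth rates in \eqref{model-5} and \eqref{tilde-z}. Closing with a Gronwall estimate on $[0,T]$, with care for the exponential factors and the uniform integrability provided by Lemma \ref{lm-a1}, yields $\E_z Y(t) \leq \E_z \tilde Y(t) + \tilde\kappa_T x$ uniformly in the admissible relaxed control.

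The main obstacle is the second inequality: unlike $\tilde X$, whose drift is obtained from $X$'s by subtracting a nonpositive term, $\tilde Y$ has drift coefficients differing from $Y$'s by more than a sign-definite perturbation, so a pure stochastic-comparison shortcut is unavailable. The delicate bookkeeping of the resulting cross terms---ensuring the excess of $Y$ over $\tilde Y$ is driven solely by the coupling $c_2X$ and scales linearly in $x$, while the terms involving $\tilde Y$ are absorbed into $\E_z\tilde Y(t)$ on the right-hand side---is the crux. Uniformity of $\tilde\kappa_T$ over admissible relaxed controls is maintained throughout via the control-free moment bounds of Lemma \ref{lm-a1}.
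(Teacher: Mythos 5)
Your treatment of \eqref{e.1-a4} is correct and, in fact, cleaner than the paper's. The paper writes the SDE for $X-\tilde X$, applies It\^o to $(X-\tilde X)^2$, uses H\"older with exponents $(2,4,4)$ on the cross term, Gronwall to bound the second moment by a constant times $y$, and finally Cauchy--Schwarz to get $\E|X-\tilde X|\lesssim\sqrt y$. You instead invoke the stochastic comparison theorem to get $U:=\tilde X-X\ge 0$ pathwise, drop the dissipative $-b_1(X+\tilde X)U$ term, apply Cauchy--Schwarz only to $\E[XY]$ using the second-moment bounds of Lemma \ref{lm-a1}, and close with Gronwall on the first moment. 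That yields $\E U\lesssim xy\le H^{3/2}\sqrt y$, which is stronger than what is claimed and what the paper's second-moment route produces. The only point to be careful about is that $\tilde X$'s drift $x(\bar a_1-b_1 x)$ is merely locally Lipschitz, so the comparison theorem must be applied after a localization, but that is routine given the finite-$T$ moment bounds.

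For \eqref{e.2-a4}, however, there is a genuine gap, and the argument you outline cannot be repaired within the Gronwall framework. The drifts of $Y$ and $\tilde Y$ in \eqref{model-5} and \eqref{tilde-z} differ not only by the coupling $c_2 X$ and the sign-definite terms $-h(Y)\bar m_t$, $-b_2(Y^2-\tilde Y^2)$, but also by a \emph{constant} difference in per-capita growth rate: $Y$ carries $\bar a_2$ while $\tilde Y$ carries $-\tfrac{a_2}{2}-a_{22}$. That difference produces, in the equation for $V:=Y-\tilde Y$, a term proportional to $\tilde Y$ whose contribution to $\int_0^t\E\tilde Y(s)\,ds$ is of order $y$, not $x$. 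Gronwall then gives $\E Y(t)\le \E\tilde Y(t)+\tilde\kappa_T x+C y$, and the extra $Cy$ term is fatal: in the proof of Lemma \ref{lm3.5}, \eqref{e.2-a4} is invoked precisely in the regime where $x$ is small but $y$ need not be, so an $O(y)$ remainder destroys the estimate. The paper avoids this with a stopping-time argument: with $U(z)=(1+c_2x+c_1y)^p x^2$ and $\zeta=\inf\{t:U(Z(t))\ge a_2^2/4\}$, one has $X(t)\le a_2/2$ for $t<\zeta$, a comparison theorem gives $Y(t)\le\tilde Y(t)$ pathwise on $\{t\le\zeta\}$, a Lyapunov/Doob estimate shows $\PP_z\{\zeta<T\}\lesssim x^2$, and the split $\E Y=\E[\1_{\{\zeta\ge t\}}Y]+\E[\1_{\{\zeta<t\}}Y]\le\E\tilde Y+\sqrt{\PP_z\{\zeta<t\}\,\E Y^2}\lesssim\E\tilde Y+x$ finishes. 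Your write-up correctly senses this is ``the crux,'' but the plan you describe — absorbing the $\tilde Y$-linear terms into $\E_z\tilde Y(t)$ — is not achievable, since Gronwall produces a time-integral of $\E\tilde Y$, not $\E\tilde Y(t)$ itself, and that integral is $O(y)$.
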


\begin{proof}
$$
\begin{aligned}
d[X(t)-\tilde X(t)]=&[X(t)-\tilde X(t)]\big[a_1-b_1(\tilde X(t)+X(t))\big]dt-c_3X(t)Y(t)dt\\
& + [X(t)-\tilde X(t)](\sigma_{12}dW_1(t)+\sigma_{22}dW_2(t)).
\end{aligned}
$$
By H\"older's inequality and \eqref{e1-a1} and \eqref{e2-a1},
we have for any $s\in[0,T]$, $z\in[0,H]^2$ that
$$
\E_z\Big|(X(s)-\tilde X(s))X(s)Y(s)\Big|
\leq\left[ \E_z (X(s)-\tilde X(s))^2\right]^{\frac12}\left[ \E_z  X^4(s))\right]^{\frac14}[\E_z Y^4(s)]^{\frac14}
\leq \tilde\kappa_1 y
$$
for some $\tilde\kappa_1$ depending only on $H$ and $T$.
Applying It\^o's formula yields
$$
\begin{aligned}
\E_z[X(t)-\tilde X(t)]^2
=&\E_z\int_0^t[X(s)-\tilde X(s)]^2\big[2\bar a_1+a_{11}-2b_1(\tilde X(s)
+X(s))\big]ds\\
&-2c_3\E_z\int_0^t(X(t)-\tilde X(s))X(s)Y(s)ds\\
\leq& (2\bar a_1+a_{11})\E_z\int_0^t[X(s)-\tilde X(s)]^2+2c_3\tilde\kappa_1T y.
\end{aligned}
$$
An application of Gronwall's inequality
leads to
$$
\begin{aligned}
\E_z[X(t)-\tilde X(t)]^2
\leq 2c_3\tilde\kappa_1T ye^{(2\bar a_1+a_{11})t}, t\in[0,T].
\end{aligned}
$$
Subsequently, \eqref{e.1-a4} is obtained by applying Holder's inequality to the estimate above.

Let $U(z)$ and $\tilde C_p$ be as in Lemma \ref{lm-a1},
and $\zeta=\inf\left\{t\geq0: U(Z(t))\geq \dfrac{a_2^2}4\right\}.$
If $t<\zeta$ then $X(t)\leq \dfrac{a_2}2$.
Thus,
by a comparison theorem,
$\PP_z\{Y(t)\leq\tilde Y(t), t\leq\zeta\}=1.$
By virtue of It\^o's formula,
$$
\begin{aligned}
\E_z U(Z(t\wedge\zeta))
\leq U(z)+\tilde C_p\E_z\int_0^{t\wedge\zeta}U(Z(s))ds\\
\leq U(z)+\tilde C_p\E_z\int_0^{t}U(Z(s\wedge\zeta))ds.
\end{aligned}
$$
In view of Gronwall's inequality, we have
$$\E_z U(Z(t\wedge\zeta))\leq U(z)e^{\tilde C_pt}.$$
Thus, for $z\in[0,H]^2$, we have
$$\PP_z\{\zeta<T\}\leq (1+c_2x+c_1y)x^2\dfrac{4e^{\tilde C_pt}}{a_2^2}\leq \tilde\kappa_2 x^2$$
for some $\tilde\kappa_2$ depending on $H$ and $T$.
As a result,
$$
\begin{aligned}
\E_z Y(s)ds
\leq& \E_z \1_{\{\zeta<t\}} \tilde Y(t)
+\E_z \1_{\{\zeta<t\}}  Y(t)\\
\leq& \E_z \tilde Y(t) + \left(\PP_z\{\zeta<t\}\E_z\left[ Y(t)\right]^2\right)^{\frac12}\\
\leq&\E_z \tilde Y(t) + \tilde\kappa_4x
\end{aligned}
$$
for some $\tilde\kappa_4$ depending only on $H$ and $T$.
\end{proof}

\begin{proof}[Proof of Lemma \ref{lm3.5}]
We shall show that
$$\dfrac1t \int_0^t\E_{z}f(Z(s))ds\geq9\lambda, t\in[T_1,T_2]$$
when $z\in[0,H]^2$ and either $x$ or $y$ is sufficiently small.
The other claims can be proved similarly.
As a result of \eqref{e.l1}, \eqref{e2-a1}, \eqref{e1-a3} and \eqref{e.1-a4}, we have for $t\in[T_1,T_2]$ that
$$
\begin{aligned}
\dfrac1t \int_0^t\E_{z}f(Z(s))ds
\geq
&\dfrac1t \int_0^t\E_{x}f(\tilde X(s),0)ds
-\left|\dfrac1t \int_0^t\E_{x}f(\tilde X(s),0)ds-\dfrac1t \int_0^t\E_{x}f(Z(s))ds\right|
\\
\geq&\dfrac1t \int_0^t\E_{x}f(\tilde X(s),0)ds-\dfrac\ell t\int_0^t\E_z|\tilde X(s)-X(s)|ds- \dfrac\ell t\int_0^t \E_zY(s)ds\\
\geq&10\lambda-\ell (\tilde\kappa_{T_2}\sqrt{y}+\bar\kappa_{T_2} y)\\
\geq&9\lambda
\end{aligned}
$$
when $z\in[0,H]^2$ and $y$ is sufficiently small.
Similarly, by \eqref{e.l1}, \eqref{e2-a1}, and  \eqref{e.2-a4}, we have
$$
\begin{aligned}
\dfrac1t \int_0^t\E_{z}f(Z(s))ds
\geq
&f(0,0)
-\dfrac1t \int_0^t\E_{z}\left|f(0,0)-f(Z(s))\right|ds
\\
\geq&f(0,0)-\dfrac\ell{t}\int_0^t\E_z\Big(X(s)+Y(s)\Big)ds\\
\geq&f(0,0)-\lambda-\ell (\bar\kappa_{T_2} x+\tilde\kappa_{T_2}x)\\
\geq&9\lambda
\end{aligned}
$$
when $z\in[0,H]^2$ and $x$ is sufficiently small.
Similarly,
using \eqref{e1-a3}, \eqref{e2-a3},
and \eqref{e.l2},
we can prove the remaining results of Lemma \ref{lm3.5}.
\end{proof}

\begin{proof}[Proof of Lemma \ref{lm2.3}]
Similar to \eqref{e4-lm3.3},
we have
$$\op_u V_2(z)\leq K_5-\beta|z|^2,\, \text{ for any }\,z\in\R^2_+.$$
Thus,
\begin{equation}\label{e3-lm2.3}
\dfrac{V_2(Z(t))}t\leq\dfrac{V_2(z)}t+K_5t-\beta\dfrac1t\int_0^t|Z(s)|^2ds
+ \dfrac{\wdt M(t)}t
\end{equation}
where
$$\wdt M(t)=\int_0^t\Big[c_2X(s)(\sigma_{11}dW_1(s)+\sigma_{12}dW_2(s))+c_1Y(t)(\sigma_{12}dW_1(t)+\sigma_{22}dW_2(t))\Big].$$
By the strong law of large number for martingales,
\begin{equation}\label{e4-lm2.3}
\limsup_{t\to\infty}\left( M(t)-\dfrac{\beta}2\int_0^t|Z(s)|^2ds\right)\leq 0 \ \hbox{ a.s.}
\end{equation}
Since
$\liminf_{t\to\infty}\frac{V_2(Z(t))}t\geq0$,
it follows from \eqref{e3-lm2.3} and \eqref{e4-lm2.3}
we obtain that
$$\limsup\dfrac1t\int_0^t|Z(s)|^2ds\leq \dfrac{2K_5}\beta
\ \text{ a.s.}$$
Since $\Phi(yh(y)u))\leq K_6(1+y)$
for some $K_6>0$,
we can easily obtain the desired results: \eqref{e1-lm2.3} and \eqref{e2-lm2.3}.

To show the tightness of the family $\{\eta_\nu:\,\nu\in\Pi_{RM}\}$ in $\R^{2,\circ}_+$,
we can
derive
$$\int_{\R^{2,\circ}_+} V(z)\eta_\nu(dz)\leq \dfrac{C}{1-q}$$
from
the estimate
$$
\E_z [V(Z(t))]^\theta\leq e^{(H_1+2)T_2}q^{t/(2T_2)}[V(z)]^\theta +\dfrac{C}{1-q}
$$
where $\theta$ and $q$ are constants in Theorem \ref{thm3.1}.
The above estimate can be proved in the same manner as
in the proof of Theorem \ref{thm3.1}
with the perturbed Lyapunov function $V^\eps(z,w)$
replaced with $V(z)$.
Alternatively,  it can be shown simply by
letting $\eps\to0$ in \eqref{e0-thm3.1}.
\end{proof}

{\small

}

\end{document}